\documentclass[a4paper,10pt,twoside]{article}

\usepackage{latexsym,amsmath,amssymb}\usepackage{amsthm,amscd,mathrsfs}
\usepackage{bm}
\usepackage[pdftex]{graphicx}
\usepackage{pgfplots}
\usepackage{tikz}
\usetikzlibrary{positioning}
\usetikzlibrary{arrows}
\usepackage{fancybox}
\usepackage{multicol}
\usepackage{tabularx}
\usepackage{comment}
\usepackage{cases}
\usepackage[top=3.5cm,bottom=3.5cm,left=3cm,right=3cm]{geometry}

\newtheorem{thm}{Theorem}[section]

\newtheorem{lem}{Lemma}[section]

\newtheorem{df}{Definition}[section]

\theoremstyle{definition}

\newtheorem{rem}{Remark}[section]
\newtheorem{ex}{Example}[section]
\newtheorem{prob}{Problem}[section]
\newtheorem{sol}{Solution\!\!}


\makeatletter
 
 \@addtoreset{equation}{section}
\makeatother

\title{{\sc Type-2 Fuzzy Initial Value Problems for Second-order T2FDEs}}
\author{N. Someyama$^{*1}$, H. Uesu$^{*2}$, K. Shinkai$^{*3}$ $\&$ S. Kanagawa$^{*4}$}
\date{{\small 
$^{*1}$Shin-yo-ji Buddhist Temple, 5-44-4 Minamisenju, Arakawa-ku, Tokyo 116-0003, Japan\\
{\tt philomatics@outlook.jp}\\ \vspace{3mm}
$^{*2}$Kanazawa Institute of Technology, 7-1 Ohgigaoka, Nonoichi-shi, Ishikawa 921-8501, Japan\\
{\tt uesu@neptune.kanazawa-it.ac.jp}\\ \vspace{3mm}
$^{*3}$Tokyo Kasei Gakuin University, 2600 Aihara, Machida-shi, Tokyo 194-0211, Japan
\\
{\tt k-shinkai@kasei-gakuin.ac.jp}\\ \vspace{3mm}
$^{*4}$Tokyo Gakugei University, 4-1-1 Nukuikita-machi, Koganei-shi, Tokyo 184-8501, Japan
\\
{\tt sgk02122@nifty.ne.jp}
}}
\pagestyle{myheadings}

\begin{document}
\maketitle
\markboth{SUSK}{Type-2 Fuzzy Initial Value Problems for Second-order T2FDEs}

\begin{abstract}
Type-2 fuzzy differential equations (T2FDEs) of order 1 are already known and the solution method of type-2 fuzzy initial value problems (T2FIVPs) for them was given by M. Mazandarani and M. Najariyan \cite{MN} in 2014.
We give the solution method of second-order T2FIVPs in this paper.
Furthermore, we would like to propose new notations for type-2 fuzzy theory where symbols tend to be complicated and misleading.
In particular, the Hukuhara differential symbols introduced experimentally in this paper will give us clearler meanings and expressions.
\end{abstract}
\vspace{3mm}

{\small 
{\bf Keywords}:
Type-1 / type-2 fuzzy number, Type-1 / type-2 fuzzy-valued function, Type-1 / type-2 fuzzy H-derivative, Type-1 / type-2 fuzzy differential equation, Type-1 / type-2 fuzzy initial value problem.
}
\vspace{2mm}

{\small
{\bf 2010 Mathematics Subject Classification}: 03E72, 26E50, 34A07, 35E15, 65L05.
}
\vspace{2mm}

{\small
{\bf Corresponding Author}: Norihiro Someyama $<${\tt philomatics@outlook.jp}$>$
}

{\small
\tableofcontents
}

\section{Introduction}
In 1965, fuzzy set theory \cite{Z1} was introduced as the origin of mathematical theory of ambiguity by L.A. Zadeh (1921-2017).
A fuzzy set $A$ on the universal set $X$ is characterized via the membership function $\mu_A:X\to [0,1]$ and the membership function value $\mu_A(x)$ which represents the grade of ambiguity for each $x\in X$.
Any membership function however is basically formed from its individual function values determined by the subjectivity of the observer.
So, the grades $\{\mu_A(x):x\in X\}$ may also contain ambiguity.
Focusing on this, Zadeh \cite{Z2} introduced type-2 fuzzy set theory in 1975.
Type-2 fuzzy sets are fuzzy sets whose grades are fuzzy.
In other words, the type-2 fuzzy set includes not only the uncertainty of the data, but also the membership function which indicates the uncertainty.
Then, we consider the membership function of a type-2 fuzzy set ${\cal A}$ as $\mu_{{\cal A}}:X\to [0,1]^{[0,1]}$.

The concept of fuzzy derivatives was proposed by Chang $\&$ Zadeh \cite{CZ}.
In addition, fuzzy derivatives using the extension principle were proposed by Dubois $\&$ Prade \cite{DP}, and some other concepts related to fuzzy derivatives were discussed by Puri $\&$ Ralescu \cite{PR1}.
Fuzzy initial value problems have been researched since Kaleva \cite{K} and Bede $\&$ Gal \cite{BG}, and several attempts have been proposed to define the differentiability of fuzzy functions.
Among them, Hukuhara differentiability and strongly generalized differentiability \cite{BG,PR1} have attracted particular attention.
For the sake of simplicity, `type-1 / type-2 fuzzy differential equations' and `type-1 / type-2 fuzzy initial value problems' are often abbreviated as `T1/T2FDEs' and `T1/T2FIVPs' respectively in this paper.

For example, suppose that we have a highly experienced expert and an inexperienced student measure the temperature of a certain substance and ask them to indicate the membership function of temperature (More specifically, see Section 5 of \cite{MN}).
In this case, there is a possibility that different membership functions will be expressed, and the former and the latter can be recognaized as the principle set and the foot-print set (see Definition \ref{df:FP}-\ref{df:PS}), respectively.
Therefore, type-2 fuzzy sets are useful if the exact form of the membership function is not known, or if the grade of the membership function itself is ambiguous or inaccurate.
Since there are so many problems where the exact form of the membership function cannot be determined, type-2 fuzzy sets are suitable for dealing with high levels of uncertainty that involve more complicated calculations.
Moreover, the parameters and variables appearing in differential equations in real problems are usually very imprecise, but we may well be able to model them by type-2 fuzzy theory and hence T2FDEs.

In 2014, Mazandarani $\&$ Najariyan \cite{MN} studied first-order T2FDEs and took up some concrete T2FIVPs for them.
Related to this, we study, in this paper, second order T2FDEs and T2FIVPs for them.
We also present and prove other theorems that may not be known in the case of type-1 and type-2 first order.
We attack the case of crisp coefficients in this paper.
The physical phenomena of our world are generally represented by second-order differential equations, so we believe that our study will be necessary and useful in fields such as mathematical physics.
In fact, we can think that it is appropriate to set initial values as fuzzy numbers if an expert measures the values.
Furthermore, although the accuracy of experiments is improving remarkably in science, it is useful to discuss the accuracy of old experiments and that of present experiments as the foot-print set and the principle set, respectively.

Incidentally, it may be necessary to seek the best notation because type-2 fuzzy theory tends to be complicated in notations.
We would like to propose some new notations on a trial basis in this paper.

\section{Preparation}
We prepare definitions of terms, notations and known results on type-2 fuzzy theory required in this paper.
The knowledges on type-1 fuzzy theory will be also required, but we put them in the appendix because writing them in this section makes them redundant.

For convenience, we use a notation such as `the fuzzy set $A:X\to [0,1]$' and basically write $A(x)$ for the grade of $A$.
`Crisp' expresses `non-fuzzy'.

\subsection{Type-2 Fuzzy Numbers}
We first introduce type-2 fuzzy sets.
A type-2 fuzzy set is defined by its membeship function with a fixed input order (See Remark \ref{rem:psmf}, 1), later for details).

\begin{df}[\cite{ML}]
\label{df:T2FS}
A type-2 fuzzy set ${\cal A}$ on $X$ is characterized by
\begin{align}
\label{eq:pvrep}
{\cal A}:=\{(x,u;\nu_{{\cal A}}(x,u)):x\in X,u\in R(\mu_{{\cal A}}(x))\subset [0,1]\}
\end{align}
where 
\begin{itemize}
\item $\nu_{{\cal A}}$ is the membership function of ${\cal A}$ from the ordered pair $(x,u)$ to $[0,1]$,
\item $R(\mu_{{\cal A}}(x))$, for each $x\in X$, is the range of $\mu_{{\cal A}}:X\to [0,1]^{[0,1]}$ called the primary membership function of ${\cal A}$,
\item $x$ is called the primary variable of ${\cal A}$,
\item $u$ is called the secondary variable of ${\cal A}$.
\end{itemize}
\end{df}

{\small
\begin{rem}
\label{rem:psmf}
\quad
\begin{itemize}
\item[1)] $\nu_{{\cal A}}$ is not just a two-variable function $X\times R(\mu_{{\cal A}})\to [0,1]$.
The conventional two-variable function has no restriction on the order of inputting variables, but in the case of $\nu_{{\cal A}}$, $x$ is input and then $u$ must be input.
However, there is another definition of type-2 fuzzy sets.
See Conclusion of this paper.
\item[2)] (\ref{eq:pvrep}) is often called the point-valued representation of ${\cal A}$.
\item[3)] The above $\mu_{\cal A}(x)$ (resp. $u$) should be recognized as the fuzzy grade (resp. the grade of the fuzzy grade) of ${\cal A}$ at $x\in X$.
\end{itemize}
\end{rem}
}

\begin{df}[\cite{ZM}]
Let ${\cal A}$ be a type-2 fuzzy set on $X$ and $x_0\in X$ a fixed point.
We define the type-1 fuzzy set 
\begin{align*}
\kappa_{{\cal A}}(x_0):=\int_{u\in R(\mu_{{\cal A}}(x_0))}\nu_{{\cal A}}^{x_0}(u)/u
\end{align*}
where $\nu_{{\cal A}}^{x_0}:=\nu_{{\cal A}}(x_0,\,\cdot\,):R(\mu_{{\cal A}}(x_0))\to [0,1]$ is called the secondary membership function of ${\cal A}$ at $x_0$.
Moreover, $\nu_{{\cal A}}(x_0,u)$ is called the secondary grade of $x_0$.
Here, similar to the well-known notation for type-1 fuzzy sets, the above integral symbol does not mean the conventional continuous sum, but just a continuous union.
\end{df}

\begin{df}[\cite{H}, Definition 2.8.1]
Let ${\cal A}$ be a type-2 fuzzy set on $X$.
The $\beta$-cut set of ${\cal A}$ is defined by
\begin{align*}
[{\cal A}]_{\beta}:=\left\langle \underline{A}_{\beta},\ \overline{A}_{\beta} \right\rangle
:=\int_{x\in X}\int_{u\in R(\mu_{{\cal A}})}\{(x,u):\nu_{{\cal A}}^x(u)\ge \beta\}
\end{align*}
for any $\beta \in [0,1]$.
Then, $\underline{A}_{\beta}$ and $\overline{A}_{\beta}$ are called the lower membership function and the upper membership function of ${\cal A}$ respectively.
Moreover, the $\alpha$-cut set of $[{\cal A}]_{\beta}$ is defined by
\begin{align}
\label{eq:[A]ba}
[{\cal A}]_{\beta}^{\alpha}:=\left\langle [\underline{A}_{\beta}]_{\alpha},\ [\overline{A}_{\beta}]_{\alpha} \right\rangle
\end{align}
for any $\alpha \in [0,1]$, where $\underline{A}_{\beta}$ and $\overline{A}_{\beta}$ are type-1 fuzzy numbers on $X$ that appear if ${\cal A}$ is cut by $\beta$.
\end{df}

{\small
\begin{rem}
The $\beta$-cut set of a type-2 fuzzy set is also called the $\beta$-plane of it.
Strictly speaking, the $\alpha$-cut set of the $\beta$-cut set of a type-2 fuzzy set ${\cal A}$ should be represented as $[[{\cal A}]_{\beta}]_{\alpha}$, but we will write it like (\ref{eq:[A]ba}) since that is annoying.
\end{rem}
}

The argument of type-2 fuzzy sets can be reduced to that of the level cut sets as with type-1 sets.
In fact, it is known \cite{H} that the level cut sets make up the original type-2 fuzzy set ${\cal A}$:
\begin{align*}
{\cal A}=\int_{x\in X}\left(\int_{u\in R(\mu_{{\cal A}}(x))}\nu_{{\cal A}}^x(u)/u\right)\biggr/ x
=\bigcup_{\beta\in [0,1]}\beta[{\cal A}]_{\beta}
=\bigcup_{\beta\in [0,1]}\beta \bigcup_{\alpha\in [0,1]}\alpha[{\cal A}]_{\beta}^{\alpha}
\end{align*}
where $\alpha[{\cal A}]_{\beta}^{\alpha}:X\to \{0,\alpha\}$ is a type-1 fuzzy set.
Thus, it is sufficient to consider and argue $\beta$-cut sets or these $\alpha$-cut sets for most problems.
We hereinafter write $S_{{\cal A}}(x_0;\beta)$ for the $\beta$-cut set of the secondary membership function $\nu_{{\cal A}}^{x_0}$ of ${\cal A}$ and do the same for ${\cal B}$.
We hereafter omit the description `$\alpha\in [0,1]$' and `$\beta\in [0,1]$' when we argue $(\alpha,\beta)$-cut sets.

\begin{df}
Let ${\cal A}$ and ${\cal B}$ be type-2 fuzzy sets on $X$.
We denote the $\beta$-cut sets of them by
\begin{align}
\label{eq:ABbetac}
[{\cal A}]_{\beta}:=\left\langle \underline{A}_{\beta},\ \overline{A}_{\beta} \right\rangle,\quad
[{\cal B}]_{\beta}:=\left\langle \underline{B}_{\beta},\ \overline{B}_{\beta} \right\rangle.
\end{align}
Then, ${\cal A}={\cal B}$ if and only if $S_{{\cal A}}(x;\beta)=S_{{\cal B}}(x;\beta)$ for all $x\in X$ and any $\beta\in [0,1]$.
\end{df}

\begin{df}[\cite{H}]
\label{df:+kop}
Let ${\cal A}$ and ${\cal B}$ be type-2 fuzzy sets on $X$ and $k\in \mathbb R$.
We denote the $\beta$-cut sets of ${\cal A}$ and ${\cal B}$ by (\ref{eq:ABbetac}).
The sum ${\cal A}+{\cal B}$ of ${\cal A}$ and ${\cal B}$ is defined by
\begin{align*}
[{\cal A}+{\cal B}]_{\beta}:=\left\langle [\underline{A}_{\beta}+\underline{B}_{\beta}]_{\alpha},\ [\overline{A}_{\beta}+\overline{B}_{\beta}]_{\alpha}\right\rangle =\left\langle [\underline{A}_{\beta}]_{\alpha}+[\underline{B}_{\beta}]_{\alpha},\ [\overline{A}_{\beta}]_{\alpha}+[\overline{B}_{\beta}]_{\alpha}\right\rangle.
\end{align*}
Moreover, the scalar multiple $k{\cal A}$ of ${\cal A}$ is defined by
\begin{align*}
[k{\cal A}]_{\beta}:=\left\langle [k\underline{A}_{\beta}]_{\alpha},\ [k\overline{A}_{\beta}]_{\alpha}\right\rangle =\left\langle k[\underline{A}_{\beta}]_{\alpha},\ k[\overline{A}_{\beta}]_{\alpha}\right\rangle.
\end{align*}
\end{df}

\begin{df}
Let ${\cal A},{\cal B}$ be type-2 fuzzy sets on $X$.
We denote the $\beta$-cut sets of them by (\ref{eq:ABbetac}).
Then, the order relationship between them, ${\cal A}\le {\cal B}$, is defined as
\begin{align*}
[{\cal A}]_{\beta}\le [{\cal B}]_{\beta}
\end{align*}
for any $\beta \in [0,1]$.
In particular, the non-negativity (resp. positivity) of a type-2 fuzzy set ${\cal A}$, ${\cal A}\ge 0$ (resp. ${\cal A}>0$), is defined by
\begin{align*}
[\underline{A}_{\beta}]_{\alpha}\ge 0\ {\rm and}\ [\overline{A}_{\beta}]_{\alpha}\ge 0\quad 
({\rm resp.}\ [\underline{A}_{\beta}]_{\alpha}>0\ {\rm and}\ [\overline{A}_{\beta}]_{\alpha}>0)
\end{align*}
for any $\alpha,\beta \in [0,1]$.
\end{df}

The following concepts are important to consider the type-2 version of a type-1 triangular fuzzy number and it is effective when we actually solve concrete T2FIVPs in Section \ref{sec:ExT2FDE}.

\begin{df}[\cite{Me}]
\label{df:FP}
Let ${\cal A}$ be a type-2 fuzzy set on $X$.
The union of all secondary domains
\begin{align*}
{\rm FP}({\cal A}):=\bigcup_{x\in X}R(\mu_{{\cal A}}(x))
\end{align*}
of ${\cal A}$ is called the foot-print set (or foot-print of uncertainty) of ${\cal A}$.
\end{df}

\begin{df}[\cite{H}, Definition 2.3.9]
\label{df:PS}
Let ${\cal A}$ be a type-2 fuzzy set on $X$.
Suppose that there exists at least one $u\in R(\mu_{{\cal A}}(x))$ satisfying
\begin{align*}
\nu_{{\cal A}}^x(u)=\nu_{{\cal A}}(x,u)=1
\end{align*}
for any $x\in X$.
If we rewrite $u_x$ for each such point $u\in R(\mu_{{\cal A}}(x))$, every $u_x$ is equal to the membership function value of the type-1 fuzzy set which is uniquely determined.
Then, that type-1 fuzzy set is called the principle set of ${\cal A}$ and is denoted by ${\rm P}({\cal A})$.
\end{df}

\begin{figure}[h]
\begin{center}
\includegraphics[width=12cm]{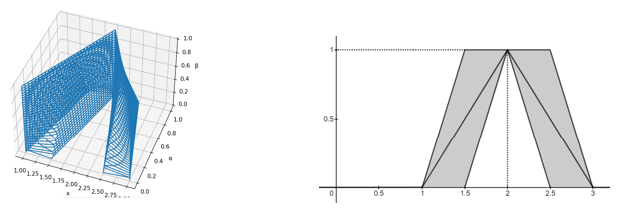}
\caption{Foot-print set}
\end{center}
\end{figure}

\begin{figure}[h]
\begin{center}
\includegraphics[width=12cm]{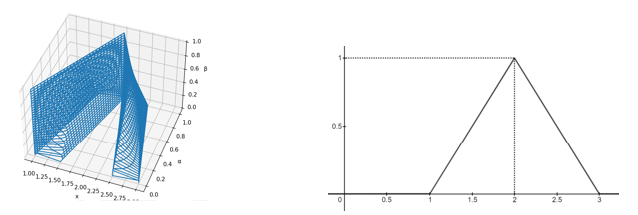}
\caption{Principle set}
\end{center}
\end{figure}

\begin{df}[\cite{H}, Section 3.4]
Let ${\cal A}\in \mathscr{T}^2(\mathbb R)$.
${\cal A}$ is perfect if and only if
\begin{itemize}
\item[i)] the upper and lower membership functions of ${\rm FP}({\cal A})$ are equal as type-1 fuzzy numbers, and
\item[ii)] the upper and lower membership functions of ${\rm P}({\cal A})$ are equal as type-1 fuzzy numbers.
\end{itemize}
Moreover, if a perfect ${\cal A}$ also satisfies that
\begin{itemize}
\item[iii)] ${\cal A}$ can be completely determined by using its ${\rm FP}({\cal A})$ and ${\rm P}({\cal A})$,
\end{itemize}
such a ${\cal A}$ is called the perfect quasi-type-2 fuzzy number on $\mathbb R$.
The space of them is denoted by $\mathscr{QT}^2(\mathbb R)$.
\end{df}

{\small
\begin{ex}
Consider ${\cal A}\in \mathscr{T}^2(\mathbb R)$ such that
\begin{itemize}
\item Primary: $\mu_{{\cal A}}(x)=\max\{1-|x-2|,\ 0\}$,
\item Secondary: $\begin{array}{rl}\nu_{{\cal A}}^{x_0}(u)&\hspace{-2.5mm}=\max\{1-10|u-x_0|,\ 0\} \\
&\hspace{-2.5mm}=\max\Bigl\{1-10\Bigl|u-\max\{1-|x-2|,0\}\Bigr|,\ 0\Bigr\}\quad (0\le u\le 1).\end{array}$
\end{itemize}
Then, the lower membership function of ${\rm FP}({\cal A})$ is given by
\begin{align*}
\max\Bigl\{1-10\Bigl|u-\max\{1-|x-2|,0\}\Bigr|,\ 0\Bigr\}=0,
\end{align*}
whereas, for e.g. $x\in [1,3]$, solving
\begin{align*}
\max\Bigl\{1-10\Bigl|u-1+|x-2|\Bigr|,\ 0\Bigr\}=0
\end{align*}
implies
\begin{align*}
u=1-|x-2|\pm \frac{1}{10}.
\end{align*}
The lower membership function of ${\rm FP}({\cal A})$ in this case, thus, is given by
\begin{align*}
u=\frac{9}{10}-|x-2|<1.
\end{align*}
Hence, this is not normal, so $u\notin \mathscr{T}^1$.
This thing implies that ${\cal A}$ is {\bf not} perfect.
\end{ex}
}

\begin{figure}[h]
\begin{center}
\includegraphics[width=6cm]{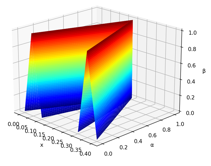}
\caption{Perfect quasi-type-2 fuzzy number}
\end{center}
\end{figure}

It is pointed out in \cite{MN} that the `triangular fuzzy number' in the type-2 world is characterized as follows:

${\cal A}\in \mathscr{QT}^2(\mathbb R)$ is triangular if and only if $[{\cal A}]_{\beta}^{\alpha}$ has
\begin{align}
[\underline{A}_{\beta}]_{\alpha}&=\left[L_{\underline{A}_{\beta}}^{\alpha},\ R_{\underline{A}_{\beta}}^{\alpha}\right], \label{eq:uAba}\\
L_{\underline{A}_{\beta}}^{\alpha}&=X_{A_{1}}^{\alpha}-(1-\beta)\left(X_{A_{1}}^{\alpha}-L_{\underline{A}_{0}}^{\alpha}\right), \label{eq:LuAba}\\
R_{\underline{A}_{\beta}}^{\alpha}&=Y_{A_{1}}^{\alpha}+(1-\beta)\left(R_{\underline{A}_{0}}^{\alpha}-Y_{A_{1}}^{\alpha}\right), \label{eq:RuAba}\\
L_{\underline{A}_{0}}^{\alpha}&=C_{{\cal A}}-(1-\alpha)\left(C_{{\cal A}}-L_{\underline{A}_{0}}\right), \label{eq:LuA0a}\\
R_{\underline{A}_{0}}^{\alpha}&=C_{{\cal A}}+(1-\alpha)\left(R_{\underline{A}_{0}}-C_{{\cal A}}\right)
\label{eq:RuA0a}
\end{align}
and
\begin{align}
[\overline{A}_{\beta}]_{\alpha}&=\left[L_{\overline{A}_{\beta}}^{\alpha},\ R_{\overline{A}_{\beta}}^{\alpha}\right], \label{eq:oAba}\\
L_{\overline{A}_{\beta}}^{\alpha}&=X_{A_{1}}^{\alpha}-(1-\beta)\left(X_{A_{1}}^{\alpha}-L_{\overline{A}_{0}}^{\alpha}\right), \label{eq:LoAba}\\
R_{\overline{A}_{\beta}}^{\alpha}&=Y_{A_{1}}^{\alpha}+(1-\beta)\left(R_{\overline{A}_{0}}^{\alpha}-Y_{A_{1}}^{\alpha}\right), \label{eq:RoAba}\\
L_{\overline{A}_{0}}^{\alpha}&=C_{{\cal A}}-(1-\alpha)\left(C_{{\cal A}}-L_{\overline{A}_{0}}\right), \label{eq:LoA0a}\\
R_{\overline{A}_{0}}^{\alpha}&=C_{{\cal A}}+(1-\alpha)\left(R_{\overline{A}_{0}}-C_{{\cal A}}\right) \label{eq:RoA0a}
\end{align}
where
\begin{align}
X_{A_1}^{\alpha}&=C_{{\cal A}}-(1-\alpha)(C_{{\cal A}}-X_{A_1}), \label{eq:XA1a}\\
Y_{A_1}^{\alpha}&=C_{{\cal A}}+(1-\alpha)(Y_{A_1}-C_{{\cal A}}) \label{eq:YA1a}
\end{align}
are, in this paper, called the {\it left principle number} and {\it right principle number} of ${\cal A}$ respectively and $C_{{\cal A}}$ stands for the core of ${\cal A}$, that is, the crisp value $[{\cal A}]_{1}^{1}$.
These meet
\begin{align*}
L_{\overline{A}_{0}}^{\alpha}
\le
X_{A_{1}}^{\alpha}
\le 
L_{\underline{A}_{0}}^{\alpha}
\le 
C_{{\cal A}}
\le
R_{\underline{A}_{0}}^{\alpha}
\le 
Y_{A_{1}}^{\alpha}
\le 
R_{\overline{A}_{0}}^{\alpha}.
\end{align*}
(See Figure \ref{fig:LXLCRYR} later.)
In particular, the supports of ${\cal A}$,
\begin{align*}
[\underline{A}_{0}]_{\alpha}=\left[L_{\underline{A}_{0}}^{\alpha},R_{\underline{A}_{0}}^{\alpha}\right]
\quad {\rm and}\quad 
[\overline{A}_{0}]_{\alpha}=\left[L_{\overline{A}_{0}}^{\alpha},R_{\overline{A}_{0}}^{\alpha}\right],
\end{align*}
represent the $\alpha$-cut sets of the lower and upper membership functions of ${\rm FP}({\cal A})$ respectively.
Also, 
\begin{align*}
[A_{1}]_{\alpha}
=
\left[X_{A_{1}}^{\alpha},Y_{A_{1}}^{\alpha}\right]
\end{align*}
is the $\alpha$-cut set of ${\rm P}({\cal A})$.
The triangular type-1 number $u$ is determined by its left end $l$, core $c$ and right end $r$:
\begin{align*}
u=\langle\!\langle l,c,r \rangle\!\rangle,
\end{align*}
but the triangular perfect quasi-type-2 fuzzy number ${\cal A}$ is determined by its upper left end $L_{\overline{A}_0}$, left principle number $X_{A_1}$, lower left end $L_{\underline{A}_0}$, core $C_{{\cal A}}$, lower right end $R_{\underline{A}_0}$, right principle number $Y_{A_1}$ and upper right end $R_{\overline{A}_0}$:
\begin{align*}
{\cal A}=\langle\!\langle L_{\overline{A}_0},X_{A_1}, L_{\underline{A}_0};C_{{\cal A}};R_{\underline{A}_0},Y_{A_1},R_{\overline{A}_0} \rangle\!\rangle.
\end{align*}

\begin{figure}[h]
\begin{center}
\includegraphics[width=7cm]{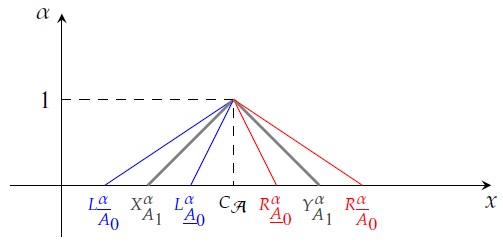}
\caption{A view of ${\cal A}$ from directly above}
\label{fig:LXLCRYR}
\end{center}
\end{figure}

We here reconfirm the significance of type-2 fuzzy numbers.
For example, let us denote 
\[
\underset{\sim}{3}=\mbox{`about $3$'}
\]
by a triangular fuzzy number.
Then, the core of $\underset{\sim}{3}$ is of course $3$, but how should we determine the left and right ends of it?
The simple representation of $\underset{\sim}{3}$ is the (more precise) isosceles triangular fuzzy number with some $\delta>0$:
\[
\underset{\sim}{3}=\langle\!\langle 3-\delta,3,3+\delta \rangle\!\rangle.
\]
So it is important to determine this $\delta$ appropriately, but it is generally difficult to determine $\delta$ objectively.
By setting $3-\delta$ (resp. $3+\delta$) as the left (resp. right) principle number and reconsidering `about $3$' as the triangular perfect quasi-type-2 fuzzy number, (\ref{eq:uAba})-(\ref{eq:RuAba}) and (\ref{eq:oAba})-(\ref{eq:RoAba}) thus determine subjective $\delta$.
Herein lies the necessity and usefulness of type-2 fuzzy notion.

\subsection{Type-2 Fuzzy Number-valued Functions}
We introduce the following Hung-Yang distance so as to consider the type-2 fuzzy topology.

\begin{df}[\cite{HY}]
Let ${\cal A},{\cal B}$ be type-2 fuzzy sets on $X$.
A distance between ${\cal A}$ and ${\cal B}$ is defined as
\begin{align}
d_{{\rm HY}}({\cal A},{\cal B})
:=\int_a^bH_{{\rm f}}(\kappa_{{\cal A}}(x),\kappa_{{\cal B}}(x))\,dx
\end{align}
where 
\begin{align*}
H_{{\rm f}}(\kappa_{{\cal A}}(x),\kappa_{{\cal B}}(x))&:=\frac{\int_0^1\beta d_{{\rm H}}(S_{{\cal A}}(x;\beta),S_{{\cal B}}(x;\beta))\,d\beta}{\int_0^1\beta \,d\beta} \\
&\hspace{1mm} =2\int_0^1\beta d_{{\rm H}}(S_{{\cal A}}(x;\beta),S_{{\cal B}}(x;\beta))\,d\beta
\end{align*}
and the above integrals are defined in the sense of Riemann.
We denote the space of type-2 fuzzy numbers on $X$ equipped with $d_{{\rm HY}}$-topology by $\mathscr{T}^2(X)$.
\end{df}

{\small
\begin{rem}
See Theorem 2.3 of \cite{MN} to make sure $\mathscr{T}^2(X)$ is a crisp metric space, that is, $d_{{\rm HY}}$ satisfies the metric axiom.
\end{rem}
}

\begin{df}[\cite{MN}, Definition 4.1]
Let ${\cal A},{\cal B}\in \mathscr{T}^2(X)$.
If there exists some ${\cal C}\in \mathscr{T}^2(X)$ such that 
\begin{align*}
{\cal A}={\cal B}+{\cal C},
\end{align*}
we call ${\cal C}$ the T2-Hukuhara difference of ${\cal A}$ and ${\cal B}$.
Then, we write ${\cal C}$ as ${\cal A}-{\cal B}$ as with type-1.
\end{df}

\begin{thm}[\cite{MN}, Theorem 4.1]
Let ${\cal A},{\cal B}\in \mathscr{T}^2(X)$.
We denote the $\beta$-cut sets of them by (\ref{eq:ABbetac}).
Then, the $\beta$-cut set of the T2-Hukuhara difference of ${\cal A}$ and ${\cal B}$ is the T1-Hukuhara difference of the upper and lower membership functions of ${\cal A},{\cal B}$:
\begin{align*}
[{\cal A}-{\cal B}]_{\beta}
=\left\langle \underline{(A-B)}_{\beta},\ \overline{(A-B)}_{\beta} \right\rangle
=\left\langle \underline{A}_{\beta}-\underline{B}_{\beta},\ \overline{A}_{\beta}-\overline{B}_{\beta} \right\rangle.
\end{align*}
\end{thm}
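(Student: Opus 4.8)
The plan is to reduce the identity to the type-1 level by chasing through the definition of the T2-Hukuhara difference together with Definition \ref{df:+kop} (the type-2 sum) and the equality criterion for type-2 fuzzy numbers. The identity asserts nothing beyond what these definitions already encode, so the whole argument is a structured unpacking rather than a genuine computation.

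First, I would name the unknown: let ${\cal C}:={\cal A}-{\cal B}$ be the T2-Hukuhara difference, which exists by hypothesis, so that ${\cal A}={\cal B}+{\cal C}$, and write its $\beta$-cut set as $[{\cal C}]_{\beta}=\langle\underline{C}_{\beta},\overline{C}_{\beta}\rangle$. The goal then becomes the identification of the two type-1 fuzzy numbers $\underline{C}_{\beta}$ and $\overline{C}_{\beta}$. Next I would take the $\beta$-cut of both sides of ${\cal A}={\cal B}+{\cal C}$: the left side is $\langle\underline{A}_{\beta},\overline{A}_{\beta}\rangle$, while by Definition \ref{df:+kop} the right side equals $\langle\underline{B}_{\beta}+\underline{C}_{\beta},\ \overline{B}_{\beta}+\overline{C}_{\beta}\rangle$, where each sum is the type-1 sum of the corresponding lower (resp. upper) membership functions, read off $\alpha$-cut by $\alpha$-cut.

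Third, invoking the equality criterion for type-2 fuzzy numbers --- which, through the correspondence between the secondary membership $\beta$-cuts $S_{\cal A}(x;\beta)$ and the endpoints $\langle\underline{A}_{\beta},\overline{A}_{\beta}\rangle$ of the $\beta$-cut set, is componentwise --- I would extract the two type-1 identities $\underline{A}_{\beta}=\underline{B}_{\beta}+\underline{C}_{\beta}$ and $\overline{A}_{\beta}=\overline{B}_{\beta}+\overline{C}_{\beta}$, holding for every $\beta$. By the very definition of the T1-Hukuhara difference, these say precisely that $\underline{C}_{\beta}=\underline{A}_{\beta}-\underline{B}_{\beta}$ and $\overline{C}_{\beta}=\overline{A}_{\beta}-\overline{B}_{\beta}$, and hence $[{\cal A}-{\cal B}]_{\beta}=\langle\underline{A}_{\beta}-\underline{B}_{\beta},\ \overline{A}_{\beta}-\overline{B}_{\beta}\rangle$, which, under the abbreviation $\underline{(A-B)}_{\beta}:=\underline{A}_{\beta}-\underline{B}_{\beta}$ and $\overline{(A-B)}_{\beta}:=\overline{A}_{\beta}-\overline{B}_{\beta}$, is exactly the claim.

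The main obstacle is not computational but notational bookkeeping: one must justify that the bracket representation $\langle\cdot,\cdot\rangle$ transports equality componentwise, i.e. that equality of the secondary membership $\beta$-cuts $S_{\cal A}(x;\beta)=S_{\cal B}(x;\beta)$ for all $x\in X$ is equivalent to the simultaneous equality of the lower and upper membership functions at each $\beta$. Once this correspondence is made explicit, the existence of ${\cal C}$ automatically supplies the componentwise T1-Hukuhara differences and guarantees that they reassemble into a valid $\beta$-indexed family, so no separate well-definedness verification is needed.
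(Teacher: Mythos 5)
Your argument is correct: the paper states this result without proof, importing it directly from [MN, Theorem 4.1], so there is no internal proof to compare against, but your definitional unpacking (set ${\cal C}={\cal A}-{\cal B}$, take $\beta$-cuts of ${\cal A}={\cal B}+{\cal C}$ via Definition \ref{df:+kop}, identify components, and read off the T1-Hukuhara differences) is exactly the standard and intended route. The one point you rightly flag --- that equality of type-2 fuzzy numbers, stated in the paper via the secondary-membership cuts $S_{{\cal A}}(x;\beta)$, transports to componentwise equality of $\underline{A}_{\beta}$ and $\overline{A}_{\beta}$ --- is the only place where anything needs saying, and your treatment of it is adequate.
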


Zadeh's extension principle derives the type-2 fuzzy number-valued function ${\cal F}:\mathscr{T}^2(I)\to \mathscr{T}^2(\mathbb R)$ via a crisp function $f:I\to \mathbb R$ in the same way as type-1.
We consider, in this paper, the case of $\mathscr{T}^2(I)=I$.
Also, type-2 fuzzy number-valued functions are simply called type-2 fuzzy functions.
If the $\beta$-cut set of ${\cal F}:I\to \mathscr{T}^2(\mathbb R)$ is represented by
\begin{align*}
[{\cal F}(x)]_{\beta}:=\left\langle [\underline{F}_{\beta}(x)]_{\alpha},\ [\overline{F}_{\beta}(x)]_{\alpha} \right\rangle,
\end{align*}
we write 
\begin{align*}
[\underline{F}_{\beta}(x)]_{\alpha}&:=[\underline{F}_{\beta,-,\alpha}(x),\ \underline{F}_{\beta,+,\alpha}(x)], \\
[\overline{F}_{\beta}(x)]_{\alpha}&:=[\overline{F}_{\beta,-,\alpha}(x),\ \overline{F}_{\beta,+,\alpha}(x)]
\end{align*}
for all $x\in I$ and any $\alpha,\beta\in [0,1]$.

We use daggers $\dag,\ddag$ introduced in Appendix \ref{app:T1FNVF} as the symbol for type-2 fuzzy derivatives.

\begin{df}[\cite{MN}, Definition 4.4]
\label{df:T2Hd}
Let ${\cal F}:I\to \mathscr{T}^2(\mathbb R)$ and $h>0$ be a crisp number.
${\cal F}$ is T2-differentiable in the first form at some $x_0\in I$ if and only if there exist ${\cal F}(x_0+h)-{\cal F}(x_0)$ and ${\cal F}(x_0)-{\cal F}(x_0-h)$ satisfying that the fuzzy limit
\begin{align}
\label{eq:1T2d}
{\cal F}^\dag(x_0)
:=\lim_{h\downarrow 0}\frac{{\cal F}(x_0+h)-{\cal F}(x_0)}{h}
=\lim_{h\downarrow 0}\frac{{\cal F}(x_0)-{\cal F}(x_0-h)}{h}
\end{align}
exists.
Moreover, ${\cal F}$ is T2-differentiable in the second form at some $x_0\in I$ if and only if there exist ${\cal F}(x_0)-{\cal F}(x_0+h)$ and ${\cal F}(x_0-h)-{\cal F}(x_0)$ satisfying that the fuzzy limit
\begin{align}
\label{eq:2T2d}
{\cal F}^\ddag(x_0)
:=\lim_{h\uparrow 0}\frac{{\cal F}(x_0)-{\cal F}(x_0+h)}{-h}
=\lim_{h\uparrow 0}\frac{{\cal F}(x_0-h)-{\cal F}(x_0)}{-h}
\end{align}
exists.
Here the above differences (resp. limits) are due to the meaning of T2-Hukuhara (resp. $d_{{\rm HY}}$).
If ${\cal F}$ is T2-differentiable in both senses at any $x\in I$, ${\cal F}^\dag$ and ${\cal F}^\ddag$ is called the (1)-T2-derivative and (2)-T2-derivative of ${\cal F}$, respectively.
\end{df}

{\small
\begin{rem}
\quad
\begin{itemize}
\item[1)] Like Remark \ref{rem:thirdfourth}, we shall ignore T2-derivatives in the third and fourth forms.
The limits of both the forms become crisp numbers as with type-1.
This thing has already been mentioned in Note 4.1 of \cite{MN}.
\item[2)] As with type-1, second-order T2-derivatives are obtained by applying first-order T2-derivatives to (\ref{eq:1T2d}) and (\ref{eq:2T2d}).
\end{itemize}
\end{rem}
}

In what follows, ${\cal F}^{\dag\dag}$ is called the (1,1)-T2-derivative of ${\cal F}$, and the other cases are similar.

\begin{thm}[\cite{MN}, Theorem 4.2]
\label{thm:MNpara}
Let ${\cal F}:I\to \mathscr{T}^2(\mathbb R)$ be T2-differentiable on $I$.
Then, the parametric forms of its T2-derivatives are given by
\begin{itemize}
\item[1)] the (1)-parametric form:
\begin{align*}
[{\cal F}^\dag(x)]_{\beta}^{\alpha}&=\left\langle[\underline{F}^\dag_{\beta}(x)]_{\alpha},\ [\overline{F}^\dag_{\beta}(x)]_{\alpha}\right\rangle \\
&=\left\langle[\underline{F}'_{\beta,-,\alpha}(x),\underline{F}'_{\beta,+,\alpha}(x)],\ [\overline{F}'_{\beta,-,\alpha}(x),\overline{F}'_{\beta,+,\alpha}(x)]\right\rangle,
\end{align*}
\item[2)] the (2)-parametric form:
\begin{align*}
[{\cal F}^\ddag(x)]_{\beta}^{\alpha}&=\left\langle[\underline{F}^\ddag_{\beta}(x)]_{\alpha},\ [\overline{F}^\ddag_{\beta}(x)]_{\alpha}\right\rangle \\
&=\left\langle[\underline{F}'_{\beta,+,\alpha}(x),\underline{F}'_{\beta,-,\alpha}(x)],\ [\overline{F}'_{\beta,+,\alpha}(x),\overline{F}'_{\beta,-,\alpha}(x)]\right\rangle.
\end{align*}
\end{itemize}
\end{thm}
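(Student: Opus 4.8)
The plan is to reduce the type-2 computation entirely to the type-1 results already at hand by working $\beta$-planewise. First I would unfold Definition \ref{df:T2Hd}: the (1)-form derivative ${\cal F}^\dag(x_0)$ is the $d_{\rm HY}$-limit as $h\downarrow 0$ of the difference quotient built from the T2-Hukuhara differences ${\cal F}(x_0+h)-{\cal F}(x_0)$ and ${\cal F}(x_0)-{\cal F}(x_0-h)$. Taking $\beta$-cuts and invoking the theorem that the $\beta$-cut of a T2-Hukuhara difference is the pair of T1-Hukuhara differences of the lower and upper membership functions, together with the $\beta$-planewise action of scalar multiplication (Definition \ref{df:+kop}), I would obtain
\begin{align*}
\left[\frac{{\cal F}(x_0+h)-{\cal F}(x_0)}{h}\right]_{\beta}
=\left\langle \frac{\underline{F}_{\beta}(x_0+h)-\underline{F}_{\beta}(x_0)}{h},\ \frac{\overline{F}_{\beta}(x_0+h)-\overline{F}_{\beta}(x_0)}{h}\right\rangle,
\end{align*}
where the inner quotients are precisely the first-form type-1 difference quotients of the type-1 fuzzy functions $\underline{F}_{\beta}$ and $\overline{F}_{\beta}$.

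Second, I would pass to the limit $h\downarrow 0$ on each $\beta$-plane, identifying $[{\cal F}^\dag(x_0)]_{\beta}$ with $\langle \underline{F}^\dag_{\beta}(x_0),\ \overline{F}^\dag_{\beta}(x_0)\rangle$, the pair of first-form T1-derivatives of the lower and upper membership functions. Third, I would apply the first parametric form of Theorem \ref{thm:Ffda} to each of $\underline{F}_{\beta}$ and $\overline{F}_{\beta}$ separately, giving
\begin{align*}
[\underline{F}^\dag_{\beta}(x)]_{\alpha}=[\underline{F}'_{\beta,-,\alpha}(x),\ \underline{F}'_{\beta,+,\alpha}(x)],\qquad
[\overline{F}^\dag_{\beta}(x)]_{\alpha}=[\overline{F}'_{\beta,-,\alpha}(x),\ \overline{F}'_{\beta,+,\alpha}(x)],
\end{align*}
which, assembled into the angle-bracket pair, is exactly the (1)-parametric form. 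For the (2)-form the three steps carry over verbatim after replacing the first-form T2-Hukuhara differences by the second-form ones from Definition \ref{df:T2Hd}; the only change is that I would invoke the second parametric form of Theorem \ref{thm:Ffda}, which interchanges the lower and upper endpoints, producing the claimed swap of the $-,\alpha$ and $+,\alpha$ entries.

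The step I expect to be the main obstacle is the interchange of the $d_{\rm HY}$-limit with the $\beta$-cut operation in the second step. Since $d_{\rm HY}$ is built by integrating the Hausdorff distances of the $\beta$-planes against $\beta\,d\beta$ (and then over the primary variable), convergence in $d_{\rm HY}$ does not by itself force convergence on each fixed $\beta$-plane. I would handle this by noting that both the T2-Hukuhara difference and the scalar multiplication act $\beta$-planewise, so that the assumed existence of ${\cal F}^\dag(x_0)$ already supplies $\beta$-planewise convergent difference quotients whose limits are the T1-derivatives above; the monotonicity and semicontinuity conditions on the endpoints then guarantee that these $\beta$-planewise limits reassemble into a genuine type-2 fuzzy number whose $\beta$-cut is the asserted pair. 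Everything else is a direct transcription of the type-1 statements.
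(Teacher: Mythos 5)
The paper does not actually prove this statement: it is imported verbatim as Theorem 4.2 of \cite{MN}, and the only place the paper touches it is Theorem \ref{thm:FT2sdba}, whose proof uses it as a black box. So there is no internal proof to compare against; judged on its own, your $\beta$-planewise reduction --- take $\beta$-cuts of the two Hukuhara difference quotients via the quoted Theorem 4.1 of \cite{MN}, pass to the limit plane by plane, and apply Theorem \ref{thm:Ffda} to $\underline{F}_{\beta}$ and $\overline{F}_{\beta}$ separately --- is exactly the standard route, and it is the one the cited source follows. The one soft spot is the step you flag yourself: $d_{{\rm HY}}$ is an integral metric in $\beta$ (and in the primary variable), so convergence of the difference quotients in $d_{{\rm HY}}$ does not by itself yield $d_{{\rm H}}$-convergence on each fixed $\beta$-plane, and your resolution (``the assumed existence of ${\cal F}^{\dag}$ already supplies $\beta$-planewise convergent quotients'') is an assertion rather than an argument. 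To close it cleanly you would either build $\beta$-planewise convergence into the definition of T2-differentiability (which is what \cite{MN} and this paper do in practice), or extract convergence for almost every $\beta$ from the vanishing of the integral and upgrade it to all $\beta$ using the left-continuity and monotonicity of the cut sets in $\beta$. With that caveat made explicit, the argument is complete and consistent with how the result is used in the rest of the paper.
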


\begin{thm}[\cite{MN}, Corollary 4.1]
\label{thm:MNtpq}
Let ${\cal F}:I\to \mathscr{QT}^2(\mathbb R)$ be triangular, that is, 
\begin{align*}
{\cal F}(x)=\langle\!\langle L_{\overline{F}_0(x)},X_{F_1(x)},L_{\underline{F}_0(x)};C_{{\cal F}(x)};R_{\underline{F}_0(x)},Y_{F_1(x)},R_{\overline{F}_0(x)} \rangle\!\rangle.
\end{align*}
\begin{itemize}
\item[1)] If ${\cal F}$ is (1)-T2-differentiable on $I$, then
\begin{align*}
{\cal F}^{\dag}(x)=\langle\!\langle L_{\overline{F}_0^{\dag}(x)},X_{F_1^{\dag}(x)},L_{\underline{F}_0^{\dag}(x)};C_{{\cal F}^{\dag}(x)};R_{\underline{F}_0^{\dag}(x)},Y_{F_1^{\dag}(x)},R_{\overline{F}_0^{\dag}(x)} \rangle\!\rangle.
\end{align*}
\item[2)] If ${\cal F}$ is (2)-T2-differentiable on $I$, then
\begin{align*}
{\cal F}^{\ddag}(x)=\langle\!\langle R_{\overline{F}_0^{\ddag}(x)},Y_{F_1^{\ddag}(x)},R_{\underline{F}_0^{\ddag}(x)};C_{{\cal F}^{\ddag}(x)};L_{\underline{F}_0^{\ddag}(x)},X_{F_1^{\ddag}(x)},L_{\overline{F}_0^{\ddag}(x)} \rangle\!\rangle.
\end{align*}
\end{itemize}
\end{thm}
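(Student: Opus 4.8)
The plan is to push the entire argument down to the $(\alpha,\beta)$-cut level, where Theorem~\ref{thm:MNpara} already expresses the two T2-derivatives through the \emph{crisp} derivatives of the four endpoint functions $\underline{F}_{\beta,\pm,\alpha}$ and $\overline{F}_{\beta,\pm,\alpha}$, and then to exploit the fact that, for a triangular ${\cal F}$, each endpoint function is an \emph{affine} combination of the seven defining numbers with coefficients depending only on $\alpha$ and $\beta$ and never on $x$. Existence of the derivatives as genuine members of $\mathscr{T}^2(\mathbb R)$ is part of the differentiability hypothesis, so I only need to identify their parametric form and recognize it as triangular.

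First I would write out the four endpoint functions explicitly. Applying the triangular description (\ref{eq:uAba})--(\ref{eq:YA1a}) pointwise in $x$, so that each of $L_{\overline{F}_0},X_{F_1},L_{\underline{F}_0},C_{{\cal F}},R_{\underline{F}_0},Y_{F_1},R_{\overline{F}_0}$ is now a function of $x$, gives for instance
\begin{align*}
\underline{F}_{\beta,-,\alpha}(x)=L_{\underline{F}_\beta(x)}^{\alpha}
=X_{F_1(x)}^{\alpha}-(1-\beta)\bigl(X_{F_1(x)}^{\alpha}-L_{\underline{F}_0(x)}^{\alpha}\bigr),
\end{align*}
and similarly for $\underline{F}_{\beta,+,\alpha}$, $\overline{F}_{\beta,-,\alpha}$, $\overline{F}_{\beta,+,\alpha}$, where $X_{F_1(x)}^{\alpha}$, $Y_{F_1(x)}^{\alpha}$ and the four $\beta=0$ ends are the affine expressions (\ref{eq:LuA0a})--(\ref{eq:RoA0a}) and (\ref{eq:XA1a})--(\ref{eq:YA1a}). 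Because every coefficient here is a constant in $x$, the crisp derivative $\tfrac{d}{dx}$ passes through term by term: differentiating simply replaces each defining number by its crisp derivative. Thus
\begin{align*}
\frac{d}{dx}X_{F_1(x)}^{\alpha}=C_{{\cal F}}'(x)-(1-\alpha)\bigl(C_{{\cal F}}'(x)-X_{F_1}'(x)\bigr),
\end{align*}
which is again of the form (\ref{eq:XA1a}) with core $C_{{\cal F}^{\dag}}:=C_{{\cal F}}'$ and left principle number $X_{F_1^{\dag}}:=X_{F_1}'$, and likewise for every other block.

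For part~1) I would then feed these differentiated endpoints into the (1)-parametric form of Theorem~\ref{thm:MNpara}. Since that form keeps the left/right roles of the endpoints, the resulting $(\alpha,\beta)$-cut of ${\cal F}^{\dag}(x)$ matches (\ref{eq:uAba})--(\ref{eq:YA1a}) verbatim with every defining number replaced by its derivative, which is precisely the claimed triangular number $\langle\!\langle L_{\overline{F}_0^{\dag}},X_{F_1^{\dag}},L_{\underline{F}_0^{\dag}};C_{{\cal F}^{\dag}};R_{\underline{F}_0^{\dag}},Y_{F_1^{\dag}},R_{\overline{F}_0^{\dag}}\rangle\!\rangle$.

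For part~2) the only, and main, subtlety is that the (2)-parametric form of Theorem~\ref{thm:MNpara} interchanges the two endpoints of every interval: the left end of the lower (resp.\ upper) cut of ${\cal F}^{\ddag}(x)$ becomes $\tfrac{d}{dx}R_{\underline{F}_\beta(x)}^{\alpha}$ (resp.\ $\tfrac{d}{dx}R_{\overline{F}_\beta(x)}^{\alpha}$) while the right end becomes $\tfrac{d}{dx}L_{\underline{F}_\beta(x)}^{\alpha}$ (resp.\ $\tfrac{d}{dx}L_{\overline{F}_\beta(x)}^{\alpha}$). I expect the bookkeeping of this orientation reversal to be the crux of the argument: one must verify that, when the seven ends of ${\cal F}^{\ddag}(x)$ are read off in the standard left-to-right order, the swap turns the list of derivatives into its reverse, so that the upper-left end is $R_{\overline{F}_0^{\ddag}}=(R_{\overline{F}_0})'$, the left principle number is $Y_{F_1^{\ddag}}=(Y_{F_1})'$, the lower-left end is $R_{\underline{F}_0^{\ddag}}=(R_{\underline{F}_0})'$, and so on down to the upper-right end $L_{\overline{F}_0^{\ddag}}=(L_{\overline{F}_0})'$, exactly the reversed seven-tuple in the statement. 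A final consistency check that this reversed tuple still satisfies the ordering beneath (\ref{eq:YA1a}), which is in any case guaranteed by the assumed T2-differentiability, confirms that ${\cal F}^{\ddag}(x)$ is a legitimate triangular member of $\mathscr{QT}^2(\mathbb R)$ and completes the proof.
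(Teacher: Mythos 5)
Your argument is correct: reducing to the $(\alpha,\beta)$-cut level via Theorem~\ref{thm:MNpara}, observing that the endpoint functions of a triangular ${\cal F}$ are $x$-independent affine combinations of the seven defining numbers (so crisp differentiation just replaces each defining number by its derivative), and tracking the endpoint swap in the (2)-parametric form is exactly the standard derivation of this result. Note, however, that the paper itself states this theorem as a quoted result (\cite{MN}, Corollary~4.1) and offers no proof of its own, so there is nothing to compare against beyond confirming that your reconstruction is the natural one and that your closing remark --- that the ordering constraints on the (reversed) seven-tuple are forced by the existence of the relevant Hukuhara differences --- correctly discharges the only delicate point.
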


\section{Main Theorems and the Proofs}

\subsection{Second-order Differentiation and Continuity of Type-2 Fuzzy Number-valued Functions}
\begin{thm}
\label{thm:FT2sdba}
Let ${\cal F}:I\to \mathscr{T}^2(\mathbb R)$ be second-order T2-differentiable on $I$.
Then, the parametric forms of its second-order T2-derivatives are given by
\begin{itemize}
\item[1)] the (1,1)-parametric form:
\begin{align*}
[{\cal F}^{\dag\dag}(x)]_{\beta}^{\alpha}&=\left\langle[\underline{F}^{\dag\dag}_{\beta}(x)]_{\alpha},\ [\overline{F}^{\dag\dag}_{\beta}(x)]_{\alpha}\right\rangle \\
&=\left\langle[\underline{F}''_{\beta,-,\alpha}(x),\underline{F}''_{\beta,+,\alpha}(x)],\ [\overline{F}''_{\beta,-,\alpha}(x),\overline{F}''_{\beta,+,\alpha}(x)]\right\rangle,
\end{align*}
\item[2)] the (1,2)-parametric form:
\begin{align*}
[{\cal F}^{\dag\ddag}(x)]_{\beta}^{\alpha}&=\left\langle[\underline{F}^{\dag\ddag}_{\beta}(x)]_{\alpha},\ [\overline{F}^{\dag\ddag}_{\beta}(x)]_{\alpha}\right\rangle \\
&=\left\langle[\underline{F}''_{\beta,+,\alpha}(x),\underline{F}''_{\beta,-,\alpha}(x)],\ [\overline{F}''_{\beta,+,\alpha}(x),\overline{F}''_{\beta,-,\alpha}(x)]\right\rangle,
\end{align*}
\item[3)] the (2,1)-parametric form:
\begin{align*}
[{\cal F}^{\ddag\dag}(x)]_{\beta}^{\alpha}&=\left\langle[\underline{F}^{\ddag\dag}_{\beta}(x)]_{\alpha},\ [\overline{F}^{\ddag\dag}_{\beta}(x)]_{\alpha}\right\rangle \\
&=\left\langle[\underline{F}''_{\beta,+,\alpha}(x),\underline{F}''_{\beta,-,\alpha}(x)],\ [\overline{F}''_{\beta,+,\alpha}(x),\overline{F}''_{\beta,-,\alpha}(x)]\right\rangle,
\end{align*}
\item[4)] the (2,2)-parametric form:
\begin{align*}
[{\cal F}^{\ddag\ddag}(x)]_{\beta}^{\alpha}&=\left\langle[\underline{F}^{\ddag\ddag}_{\beta}(x)]_{\alpha},\ [\overline{F}^{\ddag\ddag}_{\beta}(x)]_{\alpha}\right\rangle \\
&=\left\langle[\underline{F}''_{\beta,-,\alpha}(x),\underline{F}''_{\beta,+,\alpha}(x)],\ [\overline{F}''_{\beta,-,\alpha}(x),\overline{F}''_{\beta,+,\alpha}(x)]\right\rangle.
\end{align*}
\end{itemize}
\end{thm}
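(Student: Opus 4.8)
The plan is to reduce everything to a double application of Theorem~\ref{thm:MNpara}, exactly as the type-1 statement in Theorem~\ref{thm:Ff2da} follows from applying Theorem~\ref{thm:Ffda} twice. The starting observation is that each second-order T2-derivative is literally a composition, e.g. ${\cal F}^{\dag\dag}=({\cal F}^{\dag})^{\dag}$, ${\cal F}^{\dag\ddag}=({\cal F}^{\dag})^{\ddag}$, and so on. Since ${\cal F}$ is assumed second-order T2-differentiable, the inner derivative ${\cal F}^{\dag}$ (resp. ${\cal F}^{\ddag}$) exists as a genuine element of $\mathscr{T}^2(\mathbb{R})$ for each $x$ and is itself T2-differentiable, so Theorem~\ref{thm:MNpara} is applicable to it.

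First I would apply Theorem~\ref{thm:MNpara} to ${\cal F}$ to record the parametric form of the inner derivative, and then treat that derivative as a fresh type-2 fuzzy function ${\cal G}$. The crucial bookkeeping step is to read off the left and right endpoint functions $\underline{G}_{\beta,-,\alpha},\underline{G}_{\beta,+,\alpha}$ (and the overline analogues) of ${\cal G}$ correctly in terms of the crisp derivatives of the endpoints of ${\cal F}$. When the inner operation is $\dag$ there is no reordering, so $\underline{G}_{\beta,-,\alpha}=\underline{F}'_{\beta,-,\alpha}$ and $\underline{G}_{\beta,+,\alpha}=\underline{F}'_{\beta,+,\alpha}$; when it is $\ddag$ the endpoints are interchanged, so the left endpoint of ${\cal G}$ is $\underline{G}_{\beta,-,\alpha}=\underline{F}'_{\beta,+,\alpha}$ and the right one is $\underline{G}_{\beta,+,\alpha}=\underline{F}'_{\beta,-,\alpha}$.

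Next I would apply Theorem~\ref{thm:MNpara} a second time, now to ${\cal G}$, and differentiate its endpoint functions once more as crisp functions of $x$. Because ordinary differentiation acts slotwise on these crisp endpoints, $(\underline{F}'_{\beta,\pm,\alpha})'=\underline{F}''_{\beta,\pm,\alpha}$, and the only thing that can move an index between the $-$ and $+$ slots is the reordering carried by a $\ddag$. Tracking this, the net reordering is governed by the parity of the number of $\ddag$'s: the two even cases $\dag\dag$ and $\ddag\ddag$ produce no net swap and hence the $(-,+)$-ordering $[\underline{F}''_{\beta,-,\alpha},\underline{F}''_{\beta,+,\alpha}]$ (and likewise for the upper membership part), whereas the two odd cases $\dag\ddag$ and $\ddag\dag$ produce a single net swap and hence the $(+,-)$-ordering $[\underline{F}''_{\beta,+,\alpha},\underline{F}''_{\beta,-,\alpha}]$. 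These are precisely the four claimed forms.

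I expect the only delicate point to be the relabeling in the intermediate step when the inner derivative is $\ddag$: one must remember that the left endpoint of ${\cal G}$ is then $\underline{F}'_{\beta,+,\alpha}$, not $\underline{F}'_{\beta,-,\alpha}$, so that the second application of Theorem~\ref{thm:MNpara} differentiates the correctly-identified endpoint. Once this relabeling is handled consistently for both the lower and upper membership functions $\underline{F}$ and $\overline{F}$, the four cases follow by the same mechanical computation together with the parity observation above.
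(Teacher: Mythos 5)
Your proposal is correct and is essentially the paper's own argument: the paper likewise proves this by substituting ${\cal F}^{\dag}$ or ${\cal F}^{\ddag}$ for ${\cal F}$ in Theorem \ref{thm:MNpara}, i.e.\ by applying that theorem twice. Your careful tracking of the endpoint relabeling when the inner derivative is $\ddag$ just makes explicit the bookkeeping the paper leaves to the reader.
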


\begin{proof}
The proof can be obtained by substituting ${\cal F}^{\dag}$ or ${\cal F}^{\ddag}$ for ${\cal F}$ in Theorem \ref{thm:MNpara}.
\end{proof}

\begin{thm}
Let ${\cal F}:I\to \mathscr{QT}^2(\mathbb R)$ be triangular, that is, 
\begin{align*}
{\cal F}(x)=\langle\!\langle L_{\overline{F}_0(x)},X_{F_1(x)},L_{\underline{F}_0(x)};C_{{\cal F}(x)};R_{\underline{F}_0(x)},Y_{F_1(x)},R_{\overline{F}_0(x)} \rangle\!\rangle.
\end{align*}
\begin{itemize}
\item[1)] If ${\cal F}$ is (1,1)-T2-differentiable on $I$, then
\begin{align*}
{\cal F}^{\dag\dag}(x)=\langle\!\langle L_{\overline{F}_0^{\dag\dag}(x)},X_{F_1^{\dag\dag}(x)},L_{\underline{F}_0^{\dag\dag}(x)};C_{{\cal F}^{\dag\dag}(x)};R_{\underline{F}_0^{\dag\dag}(x)},Y_{F_1^{\dag\dag}(x)},R_{\overline{F}_0^{\dag\dag}(x)} \rangle\!\rangle.
\end{align*}
\item[2)] If ${\cal F}$ is (1,2)-T2-differentiable on $I$, then
\begin{align*}
{\cal F}^{\dag\ddag}(x)=\langle\!\langle R_{\overline{F}_0^{\dag\ddag}(x)},Y_{F_1^{\dag\ddag}(x)},R_{\underline{F}_0^{\dag\ddag}(x)};C_{{\cal F}^{\dag\ddag}(x)};L_{\underline{F}_0^{\dag\ddag}(x)},X_{F_1^{\dag\ddag}(x)},L_{\overline{F}_0^{\dag\ddag}(x)} \rangle\!\rangle.
\end{align*}
\item[3)] If ${\cal F}$ is (2,1)-T2-differentiable on $I$, then
\begin{align*}
{\cal F}^{\ddag\dag}(x)=\langle\!\langle R_{\overline{F}_0^{\ddag\dag}(x)},Y_{F_1^{\ddag\dag}(x)},R_{\underline{F}_0^{\ddag\dag}(x)};C_{{\cal F}^{\ddag\dag}(x)};L_{\underline{F}_0^{\ddag\dag}(x)},X_{F_1^{\ddag\dag}(x)},L_{\overline{F}_0^{\ddag\dag}(x)} \rangle\!\rangle.
\end{align*}
\item[4)] If ${\cal F}$ is (2,2)-T2-differentiable on $I$, then
\begin{align*}
{\cal F}^{\ddag\ddag}(x)=\langle\!\langle L_{\overline{F}_0^{\ddag\ddag}(x)},X_{F_1^{\ddag\ddag}(x)},L_{\underline{F}_0^{\ddag\ddag}(x)};C_{{\cal F}^{\ddag\ddag}(x)};R_{\underline{F}_0^{\ddag\ddag}(x)},Y_{F_1^{\ddag\ddag}(x)},R_{\overline{F}_0^{\ddag\ddag}(x)} \rangle\!\rangle.
\end{align*}
\end{itemize}
\end{thm}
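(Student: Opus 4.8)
The plan is to reduce everything to the parametric description already in hand, namely Theorem~\ref{thm:FT2sdba}, and then translate those parametric forms back into the seven-number representation $\langle\!\langle\,\cdots\,\rangle\!\rangle$ via the triangular characterization (\ref{eq:uAba})--(\ref{eq:YA1a}). First I would record the dictionary between the two notations: for a triangular ${\cal F}$ one has $\underline{F}_{\beta,-,\alpha}=L_{\underline{F}_\beta}^\alpha$, $\underline{F}_{\beta,+,\alpha}=R_{\underline{F}_\beta}^\alpha$, $\overline{F}_{\beta,-,\alpha}=L_{\overline{F}_\beta}^\alpha$ and $\overline{F}_{\beta,+,\alpha}=R_{\overline{F}_\beta}^\alpha$, so that the left endpoints of the $(\alpha,\beta)$-cut carry the ``$-$'' index and the right endpoints carry the ``$+$'' index. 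With this dictionary the four items of Theorem~\ref{thm:FT2sdba} say precisely that the $(\alpha,\beta)$-cut endpoints of each second-order derivative are the crisp second derivatives of the corresponding endpoints of ${\cal F}$, with the left/right order \emph{preserved} for ${\cal F}^{\dag\dag}$ and ${\cal F}^{\ddag\ddag}$ and \emph{reversed} for ${\cal F}^{\dag\ddag}$ and ${\cal F}^{\ddag\dag}$.

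The crux is a commutation lemma: crisp differentiation passes through the triangular parametrization. Inspecting (\ref{eq:XA1a})--(\ref{eq:YA1a}) together with (\ref{eq:LuAba})--(\ref{eq:RoA0a}), every $(\alpha,\beta)$-cut endpoint is an affine --- indeed convex --- combination of the seven defining numbers $L_{\overline{F}_0}(x),X_{F_1}(x),L_{\underline{F}_0}(x),C_{\cal F}(x),R_{\underline{F}_0}(x),Y_{F_1}(x),R_{\overline{F}_0}(x)$, whose coefficients depend on $\alpha$ and $\beta$ but not on $x$; for instance $L_{\underline{F}_\beta}^\alpha(x)=\alpha C_{\cal F}(x)+(1-\alpha)\bigl[\beta X_{F_1}(x)+(1-\beta)L_{\underline{F}_0}(x)\bigr]$. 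Since differentiation in $x$ is linear and those coefficients are constant in $x$, the crisp second derivative $\bigl(L_{\underline{F}_\beta}^\alpha\bigr)''(x)$ is the very same affine combination of the crisp second derivatives $L_{\overline{F}_0}''(x),X_{F_1}''(x),\dots,R_{\overline{F}_0}''(x)$, and likewise for every other endpoint. Hence whenever the left/right order is preserved the resulting object is again triangular in standard orientation, its seven defining numbers being the crisp second derivatives of the original seven, which we relabel as $L_{\overline{F}_0^{\dag\dag}},X_{F_1^{\dag\dag}},\dots$ This disposes of cases 1) and 4).

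For cases 2) and 3) I would combine the same commutation lemma with the order reversal supplied by items 2) and 3) of Theorem~\ref{thm:FT2sdba}. There the ``$-$'' and ``$+$'' endpoints are interchanged, so the crisp second derivative of the original right endpoint now occupies the left slot of the derivative's $(\alpha,\beta)$-cut and vice versa; under the dictionary above this is exactly the reversed seven-number listing $\langle\!\langle R_{\overline{F}_0^{\dag\ddag}},Y_{F_1^{\dag\ddag}},R_{\underline{F}_0^{\dag\ddag}};\,C_{{\cal F}^{\dag\ddag}};\,L_{\underline{F}_0^{\dag\ddag}},X_{F_1^{\dag\ddag}},L_{\overline{F}_0^{\dag\ddag}}\rangle\!\rangle$, and symmetrically for ${\cal F}^{\ddag\dag}$. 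An equivalent route is to iterate Theorem~\ref{thm:MNtpq}: one $(1)$-differentiation keeps the standard orientation and one $(2)$-differentiation flips it, so the compositions $\dag\dag,\dag\ddag,\ddag\dag,\ddag\ddag$ yield standard, flipped, flipped, standard respectively.

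I expect the main obstacle to be bookkeeping rather than mathematics: verifying that the endpoint swap of Theorem~\ref{thm:FT2sdba} corresponds \emph{exactly} to the left--right reversal of the seven-number representation, and that iterating Theorem~\ref{thm:MNtpq} is legitimate even after the first $(2)$-derivative has already flipped the orientation (so that Theorem~\ref{thm:MNtpq}, stated for a standard-orientation input, would be applied to a number whose representation is flipped). I would handle this either by re-deriving the action of $\dag$ and $\ddag$ on a flipped triangular number directly from the parametric forms, or --- more cleanly --- by avoiding iteration altogether and reading the orientation off items 1)--4) of Theorem~\ref{thm:FT2sdba} as above. Finally, the validity of each resulting listing as a genuine triangular quasi-type-2 fuzzy number (the chain of inequalities among the seven defining numbers displayed after (\ref{eq:YA1a})) is guaranteed by the hypothesis of second-order T2-differentiability, since the existence of the requisite T2-Hukuhara differences forces each limit to be an honest element of $\mathscr{QT}^2(\mathbb R)$.
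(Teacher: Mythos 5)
Your proof is correct, but it takes a different route from the paper. The paper's entire proof is one sentence: substitute ${\cal F}^{\dag}$ or ${\cal F}^{\ddag}$ for ${\cal F}$ in Theorem~\ref{thm:MNtpq} and apply that first-order result a second time. You instead bypass iteration, reading the answer off the second-order parametric forms of Theorem~\ref{thm:FT2sdba} and translating back to the seven-number listing via the observation that each $(\alpha,\beta)$-cut endpoint is an $x$-independent affine combination of the seven defining numbers (your identity $L_{\underline{F}_\beta}^\alpha=\alpha C_{\cal F}+(1-\alpha)\bigl[\beta X_{F_1}+(1-\beta)L_{\underline{F}_0}\bigr]$ checks out against (\ref{eq:LuAba}), (\ref{eq:LuA0a}) and (\ref{eq:XA1a})), so crisp differentiation commutes with the parametrization. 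What your approach buys is precisely the point you flag yourself: the paper's iteration applies Theorem~\ref{thm:MNtpq} --- stated for a standard-orientation triangular input --- to ${\cal F}^{\ddag}$, whose representation has already been flipped, and never justifies that second application; your non-iterative argument makes the orientation bookkeeping explicit and closes that gap. What the paper's route buys is brevity, and a uniform treatment that would extend mechanically to $n$th order. Both proofs share the same final hand-wave, namely that second-order T2-differentiability guarantees the resulting seven numbers satisfy the required inequality chain so that the limit really lies in $\mathscr{QT}^2(\mathbb R)$; you at least state this assumption explicitly where the paper leaves it implicit.
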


\begin{proof}
The proof can be obtained by substituting ${\cal F}^{\dag}$ or ${\cal F}^{\ddag}$ for ${\cal F}$ in Theorem \ref{thm:MNtpq}.
\end{proof}

\begin{df}
Let ${\cal F}:I\to \mathscr{T}^2(\mathbb R)$ and $h>0$ be a crisp number.
${\cal F}$ is T2-continuous on $I$ if and only if there exists the limit in $d_{{\rm HY}}$:
\begin{align}
\label{eq:conti}
\lim_{h\downarrow 0}\{{\cal F}(x+h)-{\cal F}(x)\}
=\lim_{h\downarrow 0}\{{\cal F}(x)-{\cal F}(x-h)\}
=0
\end{align}
for any $x\in I$.
We write ${\cal C}(I;\mathscr{T}^2(\mathbb R))$ for the space of T2-continuous fuzzy functions.
\end{df}

\begin{thm}
\label{thm:difconti}
If ${\cal F}:I\to \mathscr{T}^2(\mathbb R)$ is T2-differentiable on $I$, then ${\cal F}$ is T2-continuous on $I$.
\end{thm}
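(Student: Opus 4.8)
The plan is to transplant the classical argument ``differentiability implies continuity'' into the $d_{{\rm HY}}$-metric, the key algebraic fact being that the Hukuhara difference ${\cal F}(x+h)-{\cal F}(x)$ is exactly $h$ times the difference quotient whose limit defines ${\cal F}^\dag(x)$. Concretely, writing ${\cal Q}_h:=\frac{{\cal F}(x+h)-{\cal F}(x)}{h}$, the scalar-multiplication rule of Definition \ref{df:+kop} gives $h{\cal Q}_h={\cal F}(x+h)-{\cal F}(x)$ for $h>0$, since $h\cdot\tfrac1h$ acts as the identity on every $(\alpha,\beta)$-cut. Thus the continuity requirement $\lim_{h\downarrow 0}\{{\cal F}(x+h)-{\cal F}(x)\}=0$ in (\ref{eq:conti}) becomes a statement about $d_{{\rm HY}}(h{\cal Q}_h,0)$.

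The first step I would isolate as a lemma is the positive homogeneity of the metric: for every $h>0$ and all ${\cal K},{\cal L}\in\mathscr{T}^2(\mathbb R)$, $d_{{\rm HY}}(h{\cal K},h{\cal L})=h\,d_{{\rm HY}}({\cal K},{\cal L})$. This follows from the definition of $d_{{\rm HY}}$: multiplying by $h>0$ rescales the primary variable by $t\mapsto ht$ while leaving each secondary slice $S(t;\beta)$ (a set in the grade space) unchanged, so that $S_{h{\cal K}}(t;\beta)=S_{\cal K}(t/h;\beta)$; substituting $s=t/h$ in the outer Riemann integral produces the Jacobian factor $h$, and the inner $\beta$-integral and the Hausdorff distance $d_{{\rm H}}$ are untouched. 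Taking ${\cal L}=0$ and using $h\cdot 0=0$ then yields $d_{{\rm HY}}(h{\cal Q}_h,0)=h\,d_{{\rm HY}}({\cal Q}_h,0)$.

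The second step is the standard boundedness-times-vanishing-factor estimate. By T2-differentiability (Definition \ref{df:T2Hd}) the Hukuhara difference ${\cal F}(x+h)-{\cal F}(x)$ exists for all small $h>0$ and ${\cal Q}_h\to{\cal F}^\dag(x)$ in $d_{{\rm HY}}$; hence the triangle inequality gives $d_{{\rm HY}}({\cal Q}_h,0)\le d_{{\rm HY}}({\cal Q}_h,{\cal F}^\dag(x))+d_{{\rm HY}}({\cal F}^\dag(x),0)$, where the first summand tends to $0$ and the second is a fixed finite constant $M$. Therefore $d_{{\rm HY}}({\cal Q}_h,0)\le M+1$ for all sufficiently small $h$, and consequently $d_{{\rm HY}}({\cal F}(x+h)-{\cal F}(x),0)=h\,d_{{\rm HY}}({\cal Q}_h,0)\le (M+1)h\to 0$ as $h\downarrow 0$. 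The companion limit $\lim_{h\downarrow 0}\{{\cal F}(x)-{\cal F}(x-h)\}=0$ is obtained verbatim from the second representation of ${\cal F}^\dag(x)$ in (\ref{eq:1T2d}), and the case of a function that is T2-differentiable in the second form is identical after replacing ${\cal F}^\dag,h$ by ${\cal F}^\ddag,-h$ and invoking (\ref{eq:2T2d}).

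The main obstacle I anticipate is the homogeneity lemma of the first step rather than the final estimate: one must verify carefully that scalar multiplication by $h>0$ leaves the secondary membership slices $S_{{\cal A}}(t;\beta)$ invariant (only relocating them along the primary axis), and must make the change of variables legitimate by choosing the integration limits $a,b$ in $d_{{\rm HY}}$ large enough to contain the supports of all numbers involved (or by integrating over $\mathbb R$). If one prefers to bypass the homogeneity lemma, an alternative is to invoke Theorem \ref{thm:MNpara}, which shows that T2-differentiability forces every endpoint function $\underline{F}_{\beta,\pm,\alpha}$ and $\overline{F}_{\beta,\pm,\alpha}$ to be classically differentiable, hence continuous, in $x$; feeding these continuous endpoints back into the definition of $d_{{\rm HY}}$ and passing to the limit through the $\beta$- and primary-variable integrals by dominated convergence gives the same conclusion, at the cost of a uniformity-in-$\alpha$ check forced by the supremum in $d_{{\rm H}}$.
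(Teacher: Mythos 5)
Your proof follows the same route as the paper's: write ${\cal F}(x+h)-{\cal F}(x)$ as $h$ times the difference quotient ${\cal Q}_h$ and let $h\downarrow 0$. The paper states this in two lines and stops; your positive-homogeneity lemma for $d_{{\rm HY}}$ and the triangle-inequality boundedness estimate for $d_{{\rm HY}}({\cal Q}_h,0)$ supply exactly the metric-space justification the paper leaves implicit, so the argument is correct and, if anything, more complete.
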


\begin{proof}
${\cal F}$ is T2-differentiable on $I$ by the assumption, so the limit
\begin{align*}
\lim_{h\downarrow 0}\frac{{\cal F}(x+h)-{\cal F}(x)}{h}=\lim_{h\downarrow 0}\frac{{\cal F}(x)-{\cal F}(x-h)}{h}
\end{align*}
exists.
Also, we can denote
\begin{align}
{\cal F}(x+h)-{\cal F}(x)&=\frac{{\cal F}(x+h)-{\cal F}(x)}{h}h \label{eq:diffFx+h-Fx}\\
{\cal F}(x)-{\cal F}(x-h)&=\frac{{\cal F}(x)-{\cal F}(x-h)}{h}h \label{eq:diffFx-Fx-h}
\end{align}
for any $x\in I$.
Hence, we have (\ref{eq:conti}) by letting $h\downarrow 0$ in both sides of (\ref{eq:diffFx+h-Fx}) and (\ref{eq:diffFx-Fx-h}).
\end{proof}

\subsection{Differentiation of Four Rules of Type-2 Fuzzy Number-valued Functions}
We say in this paper that ${\cal F}:I\to \mathscr{T}^2(\mathbb R)$ is second-order T2-differentiable on $I$ in the same case of differentiability, if ${\cal F}^{\dag}$ and ${\cal F}^{\ddag}$ are (1)-T2-differentiable and (2)-T2-differentiable on $I$, respectively.

\begin{thm}
\label{thm:F+GsHdd}
Let ${\cal F},{\cal G}:I\to \mathscr{T}^2(\mathbb R)$ be second-order T2-differentiable on $I$ in the same case of differentiability.
Then, ${\cal F}+{\cal G}:I\to \mathscr{T}^2(\mathbb R)$ is second-order T2-differentiable on $I$ and 
\begin{align}
({\cal F}+{\cal G})^{\dag}(x)&={\cal F}^{\dag}(x)+{\cal G}^{\dag}(x), \label{eq:F+Gd}\\
({\cal F}+{\cal G})^{\ddag}(x)&={\cal F}^{\ddag}(x)+{\cal G}^{\ddag}(x), \\
({\cal F}+{\cal G})^{\dag\dag}(x)&={\cal F}^{\dag\dag}(x)+{\cal G}^{\dag\dag}(x), \\
({\cal F}+{\cal G})^{\ddag\ddag}(x)&={\cal F}^{\ddag\ddag}(x)+{\cal G}^{\ddag\ddag}(x) \label{eq:F+Gdddd}
\end{align}
for $x\in I$.
Moreover, if there exists the T2-Hukuhara difference ${\cal F}-{\cal G}$, then all of (\ref{eq:F+Gd})-(\ref{eq:F+Gdddd}) hold even if $+$ is replaced with the T2-Hukuhara difference $-$.
\end{thm}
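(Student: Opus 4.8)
The plan is to push everything down to the crisp endpoint functions, where differentiation is ordinary and linear, while treating the \emph{existence} of the relevant T2-Hukuhara differences separately from the \emph{form} of the resulting derivative. I would first treat the first-order sum identity (\ref{eq:F+Gd}) and the companion $\ddag$-identity, then bootstrap to the second-order ones, and finally transfer everything to the difference.

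For existence in the sum case, the key is the defining property of the T2-Hukuhara difference: since ${\cal F}$ and ${\cal G}$ are $(1)$-T2-differentiable, the differences ${\cal F}(x_0+h)-{\cal F}(x_0)$ and ${\cal G}(x_0+h)-{\cal G}(x_0)$ exist, so that ${\cal F}(x_0+h)={\cal F}(x_0)+({\cal F}(x_0+h)-{\cal F}(x_0))$ and likewise for ${\cal G}$. Adding these and invoking the componentwise definition of fuzzy addition (Definition \ref{df:+kop}) together with the commutativity and associativity of interval addition, I obtain that $({\cal F}+{\cal G})(x_0+h)-({\cal F}+{\cal G})(x_0)$ exists and equals $({\cal F}(x_0+h)-{\cal F}(x_0))+({\cal G}(x_0+h)-{\cal G}(x_0))$; the backward difference is handled identically. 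Dividing by $h>0$ (a positive scalar multiple, hence no endpoint swap in Definition \ref{df:+kop}) and letting $h\downarrow 0$, the fact that $d_{{\rm HY}}$ is a genuine metric makes the limit of the sum equal the sum of the limits, so ${\cal F}+{\cal G}$ is $(1)$-T2-differentiable and (\ref{eq:F+Gd}) holds. The $\ddag$-identity is proved the same way with the second-form quotients.

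To confirm the formula and to organize the second-order statements, I would also read the identities off the parametric forms. By Definition \ref{df:+kop} the $(\alpha,\beta)$-endpoint functions of ${\cal F}+{\cal G}$ are the sums of those of ${\cal F}$ and ${\cal G}$, so applying Theorem \ref{thm:MNpara} to ${\cal F}+{\cal G}$ and using linearity of the crisp derivative reproduces exactly $[({\cal F}+{\cal G})^\dag(x)]_\beta^\alpha=[{\cal F}^\dag(x)+{\cal G}^\dag(x)]_\beta^\alpha$ for all $\alpha,\beta$, whence the two type-2 fuzzy numbers coincide. For the second-order identities I would iterate: since ${\cal F},{\cal G}$ are second-order T2-differentiable in the same pure form, ${\cal F}^\dag,{\cal G}^\dag$ are again $(1)$-T2-differentiable, so applying the first-order result once gives $({\cal F}+{\cal G})^\dag={\cal F}^\dag+{\cal G}^\dag$ and applying it a second time to ${\cal F}^\dag,{\cal G}^\dag$ gives $({\cal F}+{\cal G})^{\dag\dag}={\cal F}^{\dag\dag}+{\cal G}^{\dag\dag}$; the $\ddag\ddag$-case is identical, and both can be cross-checked against Theorem \ref{thm:FT2sdba} and linearity of the crisp second derivative.

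For the difference, the hypothesis that the T2-Hukuhara difference ${\cal F}-{\cal G}$ exists lets me invoke Theorem 4.1 of \cite{MN}: the $\beta$-cut of $({\cal F}-{\cal G})(x)$ is $\langle \underline{F}_\beta(x)-\underline{G}_\beta(x),\ \overline{F}_\beta(x)-\overline{G}_\beta(x)\rangle$, so its endpoint functions are the differences $\underline{F}_{\beta,\pm,\alpha}(x)-\underline{G}_{\beta,\pm,\alpha}(x)$ and the two upper analogues. These are differences of differentiable crisp functions, so the arguments above run verbatim with $-$ in place of $+$, the signs being inherited from the T1-Hukuhara differences at the endpoint level. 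The main obstacle, and the step I would be most careful about, is precisely \emph{existence} rather than the formula: for ${\cal F}+{\cal G}$ existence of every needed T2-Hukuhara difference is automatic from the additive decomposition above, but for ${\cal F}-{\cal G}$ one must ensure that the increment $({\cal F}(x_0+h)-{\cal F}(x_0))-({\cal G}(x_0+h)-{\cal G}(x_0))$ is itself a legitimate T2-Hukuhara difference, i.e. that its candidate endpoints still define a type-2 fuzzy number. This is exactly what the hypotheses ``${\cal F}-{\cal G}$ exists'' and ``same case of differentiability'' are there to supply; once that validity is secured by cancellation in interval addition, linearity of the crisp derivative forces all of (\ref{eq:F+Gd})--(\ref{eq:F+Gdddd}).
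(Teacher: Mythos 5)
Your treatment of the sum identities and the bootstrap to second order is correct and is essentially what the paper does: the paper dismisses the sum case as immediate from Definition \ref{df:T2Hd}, and your expanded version (additive decomposition of the increments, division by the positive scalar $h$, continuity of addition under $d_{{\rm HY}}$, then one more application of the first-order result to get the $\dag\dag$ and $\ddag\ddag$ cases) is the right way to fill that in. The divergence, and the gap, is in the difference case. You correctly isolate the existence of the T2-Hukuhara differences of the increments of ${\cal F}-{\cal G}$ as ``the main obstacle,'' but you then dispose of it by asserting that the hypotheses ``are there to supply'' it and that it is ``secured by cancellation in interval addition.'' That is not an argument. At the $(\alpha,\beta)$-cut level a T1-Hukuhara difference $[a,b]-[c,d]$ exists iff $b-a\ge d-c$, so writing $w_{{\cal H}}(x):=\underline{H}_{\beta,+,\alpha}(x)-\underline{H}_{\beta,-,\alpha}(x)$ for the width of a cut, the increment $({\cal F}-{\cal G})(x+h)-({\cal F}-{\cal G})(x)$ exists only if
\[
w_{{\cal F}}(x+h)-w_{{\cal G}}(x+h)\ \ge\ w_{{\cal F}}(x)-w_{{\cal G}}(x)
\]
(together with the upper-membership analogue); knowing separately that $w_{{\cal F}}$ and $w_{{\cal G}}$ are each nondecreasing, which is all that (1)-T2-differentiability of ${\cal F}$ and ${\cal G}$ gives you, does not imply this. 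Moreover, applying Theorem \ref{thm:MNpara} to ${\cal F}-{\cal G}$ presupposes that ${\cal F}-{\cal G}$ is T2-differentiable, which is exactly what is at stake, so the ``runs verbatim with $-$ in place of $+$'' step is circular on the existence point even though it does reproduce the correct formula once differentiability is granted.

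The paper localizes this step differently: it sets ${\cal W}={\cal F}-{\cal G}$, so that ${\cal F}(x\pm h)={\cal G}(x\pm h)+{\cal W}(x\pm h)$, and invokes Lemma \ref{lem:Hddis} to split ${\cal F}(x+h)-{\cal F}(x)$ as $({\cal G}(x+h)-{\cal G}(x))+({\cal W}(x+h)-{\cal W}(x))$ and likewise for the backward increment; the difference quotients of ${\cal W}$ are thereby exhibited and their common limit is read off as ${\cal F}^{\dag}-{\cal G}^{\dag}$, with the second-order identity obtained by repeating the argument for $({\cal F}-{\cal G})^{\dag}$. If you keep your endpoint-level route you must either adopt that decomposition through Lemma \ref{lem:Hddis} or prove the width inequality above directly; as written, the difference half of your proof asserts the one nontrivial fact instead of proving it.
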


\begin{proof}
It is obvious for sums from Definition \ref{df:T2Hd}.
We prove only
\begin{align}
({\cal F}-{\cal G})^{\dag}(x)&={\cal F}^{\dag}(x)-{\cal G}^{\dag}(x), \label{eq:HF-Gd}\\
({\cal F}-{\cal G})^{\dag\dag}(x)&={\cal F}^{\dag\dag}(x)-{\cal G}^{\dag\dag}(x) \label{eq:HF-Gdd}
\end{align}
for $x\in I$.
The other two cases can be shown in the same way.
Since ${\cal F}-{\cal G}$ exists, there is some type-2 fuzzy function ${\cal W}:I\to \mathscr{T}^2(\mathbb R)$ such that
\begin{align*}
{\cal F}(x)&={\cal G}(x)+{\cal W}(x), \\
{\cal F}(x\pm h)&={\cal G}(x\pm h)+{\cal W}(x\pm h)
\end{align*}
where $h>0$ is a crisp number.
Then, we have
\begin{align}
\label{eq:Fx+h-Fx}
\begin{aligned}
{\cal F}(x+h)-{\cal F}(x)&=({\cal G}(x+h)+{\cal W}(x+h))-({\cal G}(x)+{\cal W}(x)) \\
&=({\cal G}(x+h)-{\cal G}(x))+({\cal W}(x+h)-{\cal W}(x))
\end{aligned}
\end{align}
by virtue of Lemma \ref{lem:Hddis} mentioned later.
Similarly, we also have
\begin{align}
\label{eq:Fx-Fx-h}
{\cal F}(x)-{\cal F}(x-h)
=({\cal G}(x)-{\cal G}(x-h))+({\cal W}(x)-{\cal W}(x-h)).
\end{align}
${\cal F}$ and ${\cal G}$ are T2-differentiable on $I$, so ${\cal F}(x+h)-{\cal F}(x)$, ${\cal F}(x)-{\cal F}(x-h)$, ${\cal G}(x+h)-{\cal G}(x)$ and ${\cal G}(x)-{\cal G}(x-h)$ exist for any $x\in I$.
Thus, the following derivative in $d_{{\rm HY}}$ exists:
\begin{align*}
({\cal F}-{\cal G})^{\dag}(x)=\lim_{h\downarrow 0}\frac{{\cal W}(x+h)-{\cal W}(x)}{h}=\lim_{h\downarrow 0}\frac{{\cal W}(x)-{\cal W}(x-h)}{h},\quad x\in I.
\end{align*}
This limit is ${\cal F}^{\dag}(x)-{\cal G}^{\dag}(x)$, $x\in I$, from (\ref{eq:Fx+h-Fx}) and (\ref{eq:Fx-Fx-h}).
Hence we have gained (\ref{eq:HF-Gd}).
(\ref{eq:HF-Gdd}) can be obtained by repeating the above discussion for $({\cal F}-{\cal G})^{\dag}$.
This completes the proof.
\end{proof}

\begin{lem}
\label{lem:Hddis}
Let $u_j,v_j\in \mathscr{T}^1(\mathbb R)\ (j=1,2)$.
Suppose that $(u_1+v_1)-(u_2+v_2)$, $u_1-u_2$ and $v_1-v_2$ exist.
Then, the distributive law for T1-Hukuhara differences in the following sense hold:
\begin{align*}
(u_1+v_1)-(u_2+v_2)=(u_1-u_2)+(v_1-v_2).
\end{align*}
Moreover, the same result holds for type-2 fuzzy numbers.
\end{lem}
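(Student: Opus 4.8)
The plan is to reduce everything to $\alpha$-cut endpoints, where the fuzzy sum of Definition~\ref{df:+kop1} acts coordinatewise and the only arithmetic in play is the commutativity and associativity of addition in $\mathbb{R}$. First I would unpack the hypotheses: since $u_1-u_2$ and $v_1-v_2$ exist, set $w:=u_1-u_2$ and $w':=v_1-v_2$, so that by the definition of the T1-Hukuhara difference
\[
u_1=u_2+w,\qquad v_1=v_2+w'.
\]
Adding these two identities and using Definition~\ref{df:+kop1} together with the commutativity and associativity of the fuzzy sum (both hold because each $\alpha$-cut endpoint is an ordinary real sum), I obtain
\[
u_1+v_1=(u_2+v_2)+(w+w').
\]

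Next I would invoke uniqueness of the Hukuhara difference. By hypothesis $(u_1+v_1)-(u_2+v_2)$ also exists; call it $W$, so that $u_1+v_1=(u_2+v_2)+W$. Comparing $\alpha$-cuts, any $Z\in\mathscr{T}^1(\mathbb{R})$ with $u_1+v_1=(u_2+v_2)+Z$ is forced to have endpoints $Z_{-,\alpha}=(u_1+v_1)_{-,\alpha}-(u_2+v_2)_{-,\alpha}$ and $Z_{+,\alpha}=(u_1+v_1)_{+,\alpha}-(u_2+v_2)_{+,\alpha}$ for every $\alpha$, so $Z$ is uniquely determined; hence $W=w+w'$, which is exactly the asserted identity. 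Equivalently one can bypass uniqueness and verify the identity directly: the left endpoint of $[(u_1+v_1)-(u_2+v_2)]_\alpha$ is $u_{1,-,\alpha}+v_{1,-,\alpha}-u_{2,-,\alpha}-v_{2,-,\alpha}$, while the left endpoint of $[(u_1-u_2)+(v_1-v_2)]_\alpha$ is $(u_{1,-,\alpha}-u_{2,-,\alpha})+(v_{1,-,\alpha}-v_{2,-,\alpha})$; these coincide by the associativity and commutativity of $+$ in $\mathbb{R}$, and the right endpoints coincide for the same reason.

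For the type-2 claim I would pass to the upper and lower membership functions on each $\beta$-plane. By Theorem~4.1 of \cite{MN}, which identifies the $\beta$-cut of a T2-Hukuhara difference with the pair of T1-Hukuhara differences of the upper and lower membership functions, the existence hypotheses reduce, $\beta$-plane by $\beta$-plane, to the corresponding type-1 hypotheses for the lower membership functions $\underline{(\cdot)}_\beta$ and for the upper membership functions $\overline{(\cdot)}_\beta$. Applying the type-1 distributive law just established to each of these two type-1 identities and reassembling the results inside the bracket $\langle\,\cdot\,,\,\cdot\,\rangle$ (using the type-2 sum of Definition~\ref{df:+kop}) yields the type-2 distributive law.

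The only point demanding any care is the uniqueness of the Hukuhara difference, and even that is mild; everything else is bookkeeping. There is no genuine obstacle, precisely because fuzzy addition --- unlike the product or scalar multiplication by a negative number --- never triggers a $\min/\max$ swap of the interval endpoints, so both endpoints of every $\alpha$-cut transform under one and the same additive rule.
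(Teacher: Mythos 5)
Your proposal is correct and rests on the same mechanism as the paper's proof: both reduce the identity to $\alpha$-cut endpoints, where the fuzzy sum and the T1-Hukuhara difference act coordinatewise on the left and right ends, so the claim collapses to commutativity and associativity of addition in $\mathbb{R}$ --- indeed your ``equivalently one can verify directly'' computation is literally the paper's proof. The only differences are cosmetic and in your favor: your primary route packages the computation as uniqueness of the Hukuhara difference, and you spell out the type-2 reduction via $\beta$-planes and Theorem 4.1 of \cite{MN}, whereas the paper merely asserts that the type-2 case is ``essentially equal'' to the type-1 one.
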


\begin{proof}
We prove only for type-1 fuzzy numbers, since the definition of T2-Hukuhara differences for type-2 fuzzy numbers is essentially equal to that for type-1 fuzzy numbers.
The T1-Hukuhara difference is the difference between the left ends and the right ends of two intervals.
Let the $\alpha$-cuts of $u_j,v_j\ (j=1,2)$ be represented as
\begin{align*}
[u_j]_{\alpha}:=[u_{j,-}(\alpha),u_{j,+}(\alpha)],\quad 
[v_j]_{\alpha}:=[v_{j,-}(\alpha),v_{j,+}(\alpha)].
\end{align*}
Then, we have the following and finish the proof:
\begin{align*}
&[(u_1+v_1)-(u_2+v_2)]_{\alpha} \\
&=[u_1+v_1]_{\alpha}-[u_2+v_2]_{\alpha} \\
&=([u_1]_{\alpha}+[v_1]_{\alpha})-([u_2]_{\alpha}+[v_2]_{\alpha}) \\
&=([u_{1,-}(\alpha),u_{1,+}(\alpha)]+[v_{1,-}(\alpha),v_{1,+}(\alpha)])-([u_{2,-}(\alpha),u_{2,+}(\alpha)]+[v_{2,-}(\alpha),v_{2,+}(\alpha)]) \\
&=[u_{1,-}(\alpha)+v_{1,-}(\alpha),\ u_{1,+}(\alpha)+v_{1,+}(\alpha)]-[u_{2,-}(\alpha)+v_{2,-}(\alpha),\ u_{2,+}(\alpha)+v_{2,+}(\alpha)] \\
&=[(u_{1,-}(\alpha)-u_{2,-}(\alpha))+(v_{1,-}(\alpha)-v_{2,-}(\alpha)),\ (u_{1,+}(\alpha)-u_{2,+}(\alpha))+(v_{1,+}(\alpha)-v_{2,+}(\alpha))] \\
&=[u_{1,-}(\alpha)-u_{2,-}(\alpha),\ u_{1,+}(\alpha)-u_{2,+}(\alpha)]+[v_{1,-}(\alpha)-v_{2,-}(\alpha),\ v_{1,+}(\alpha)-v_{2,+}(\alpha)] \\
&=[u_1-u_2]_{\alpha}+[v_1-v_2]_{\alpha} \\
&=[(u_1-u_2)+(v_1-v_2)]_{\alpha}.
\end{align*}
\end{proof}

{\small
\begin{rem}
We can generally have the similar results to Theorem \ref{thm:F+GsHdd} for any order $N\in \mathbb N\cup \{0\}$.
\end{rem}
}

\begin{thm}
\label{thm:F-GT2d}
Let ${\cal F},{\cal G}:I\to \mathscr{T}^2(\mathbb R)$ be second-order T2-differentiable on $I$ such that
\begin{itemize}
\item if ${\cal F}$ is (1,1)-T2-differentiable then ${\cal G}$ is (2,1)-T2-differentiable, or
\item if ${\cal F}$ is (1,2)-T2-differentiable then ${\cal G}$ is (2,2)-T2-differentiable, or
\item if ${\cal F}$ is (2,1)-T2-differentiable then ${\cal G}$ is (1,1)-T2-differentiable, or
\item if ${\cal F}$ is (2,2)-T2-differentiable then ${\cal G}$ is (1,2)-T2-differentiable.
\end{itemize}
Suppose that there exists the T2-Hukuhara difference ${\cal F}(x)-{\cal G}(x)$ for any $x\in I$.
Then, ${\cal F}-{\cal G}:I\to \mathscr{T}^2(\mathbb R)$ is second-order T2-differentiable on $I$ and 
\begin{align}
({\cal F}-{\cal G})^{\dag}(x)&={\cal F}^{\dag}(x)+(-1){\cal G}^{\ddag}(x), \label{eq:F-Gd}\\
({\cal F}-{\cal G})^{\ddag}(x)&={\cal F}^{\ddag}(x)+(-1){\cal G}^{\dag}(x), \\
({\cal F}-{\cal G})^{\dag\dag}(x)&={\cal F}^{\dag\dag}(x)+(-1){\cal G}^{\ddag\dag}(x), \label{eq:F-Gdd}\\
({\cal F}-{\cal G})^{\dag\ddag}(x)&={\cal F}^{\dag\ddag}(x)+(-1){\cal G}^{\ddag\ddag}(x), \\
({\cal F}-{\cal G})^{\ddag\dag}(x)&={\cal F}^{\ddag\dag}(x)+(-1){\cal G}^{\dag\dag}(x), \\
({\cal F}-{\cal G})^{\ddag\ddag}(x)&={\cal F}^{\ddag\ddag}(x)+(-1){\cal G}^{\dag\ddag}(x)
\end{align}
for $x\in I$.
\end{thm}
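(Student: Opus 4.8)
The plan is to reduce every one of the six assertions to an identity between the crisp endpoint functions via the parametric forms, where the statement collapses to the linearity of the ordinary derivative; the only real content is checking that the hypotheses keep every interval that arises a legitimate fuzzy number. First I would record the parametric form of $\mathcal H:=\mathcal F-\mathcal G$. Since $\mathcal F(x)-\mathcal G(x)$ exists for every $x\in I$, the parametric description of the T2-Hukuhara difference (\cite{MN}, Theorem 4.1) combined with the T1-Hukuhara difference of endpoints gives, for the lower membership function,
\begin{align*}
[\underline H_\beta(x)]_\alpha=[\underline F_{\beta,-,\alpha}(x)-\underline G_{\beta,-,\alpha}(x),\ \underline F_{\beta,+,\alpha}(x)-\underline G_{\beta,+,\alpha}(x)],
\end{align*}
and the analogous identity for $\overline H_\beta$.

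Next, focusing on the first identity (\ref{eq:F-Gd}), the others being identical up to relabeling, I would verify that $\mathcal H$ is $(1)$-T2-differentiable. Each endpoint of $\mathcal H$ is a difference of the twice-differentiable endpoints of $\mathcal F,\mathcal G$, hence differentiable, so the only genuine point is that differentiating the endpoints yields a valid fuzzy number, i.e. that its left end does not exceed its right end. This is exactly where the complementary-differentiability hypothesis is used: $\mathcal F$ being $(1)$-differentiable gives $\underline F'_{\beta,-,\alpha}\le\underline F'_{\beta,+,\alpha}$, while $\mathcal G$ being $(2)$-differentiable forces $\underline G'_{\beta,+,\alpha}\le\underline G'_{\beta,-,\alpha}$ (so that the swapped interval of Theorem \ref{thm:MNpara} is well-formed), and adding these yields $\underline F'_{\beta,-,\alpha}-\underline G'_{\beta,-,\alpha}\le\underline F'_{\beta,+,\alpha}-\underline G'_{\beta,+,\alpha}$, with the same conclusion for the upper membership function. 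Hence $\mathcal H$ is $(1)$-T2-differentiable and, by Theorem \ref{thm:MNpara} applied to $\mathcal H$, its lower part is $[\underline F'_{\beta,-,\alpha}-\underline G'_{\beta,-,\alpha},\ \underline F'_{\beta,+,\alpha}-\underline G'_{\beta,+,\alpha}]$.

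It then remains to compute $\mathcal F^\dag+(-1)\mathcal G^\ddag$ independently and see that it agrees. By Theorem \ref{thm:MNpara} the lower part of $\mathcal G^\ddag$ is the swapped interval $[\underline G'_{\beta,+,\alpha},\underline G'_{\beta,-,\alpha}]$, so Definition \ref{df:+kop} gives lower part $[-\underline G'_{\beta,-,\alpha},-\underline G'_{\beta,+,\alpha}]$ for $(-1)\mathcal G^\ddag$; adding the lower part $[\underline F'_{\beta,-,\alpha},\underline F'_{\beta,+,\alpha}]$ of $\mathcal F^\dag$ reproduces precisely the interval found above, and the same holds for the upper membership function. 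Since two type-2 fuzzy numbers coincide when all their $(\alpha,\beta)$-cut endpoints coincide, (\ref{eq:F-Gd}) follows. The other first-order identity is its mirror image (interchange $\mathcal F\leftrightarrow\mathcal G$ and $\dag\leftrightarrow\ddag$), and the four second-order identities are obtained by running the very same computation one level up: the first step already establishes $\mathcal H^\dag=\mathcal F^\dag+(-1)\mathcal G^\ddag$, to which I apply the argument again, now reading the second-order parametric forms off Theorem \ref{thm:FT2sdba}, whose case analysis supplies exactly the endpoint swaps matching each listed case.

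I expect the main obstacle to be organizational rather than analytic: keeping the four families of endpoints $\underline{(\cdot)}_{\beta,\pm,\alpha}$ and $\overline{(\cdot)}_{\beta,\pm,\alpha}$ straight while tracking the two independent sources of index-swapping — the $(2)$-type derivative and the scalar multiple by $-1$ — and confirming, in each of the six cases, that these swaps cancel so that the asserted fuzzy number is well-formed and the type hypotheses on $\mathcal F,\mathcal G$ indeed guarantee the correct ordering of endpoints at every stage. The calculus input is only the elementary $(f-g)'=f'-g'$; everything else is bookkeeping.
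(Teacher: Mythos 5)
Your route is genuinely different from the paper's: you reduce everything to the crisp endpoint functions via the $(\alpha,\beta)$-parametric forms, whereas the paper never leaves the level of the type-2 objects themselves. It writes ${\cal F}(x+h)={\cal F}(x)+{\cal W}_1(x,h)$ and ${\cal G}(x)={\cal G}(x+h)+{\cal W}_3(x,h)$, adds these to get ${\cal F}(x+h)+{\cal G}(x)={\cal F}(x)+{\cal G}(x+h)+{\cal W}_1(x,h)+{\cal W}_3(x,h)$, and then uses the assumed existence of ${\cal F}(x)-{\cal G}(x)$ to conclude that the Hukuhara difference $\{{\cal F}(x+h)-{\cal G}(x+h)\}-\{{\cal F}(x)-{\cal G}(x)\}$ exists and equals ${\cal W}_1(x,h)+{\cal W}_3(x,h)$, after which the limit is immediate. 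Your endpoint algebra is correct, and you correctly identify why the complementary-differentiability hypothesis makes the two sources of index-swapping cancel.

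The gap is in the step ``hence ${\cal H}$ is $(1)$-T2-differentiable.'' Definition \ref{df:T2Hd} demands, first, the existence for small $h>0$ of the Hukuhara differences ${\cal H}(x+h)-{\cal H}(x)$ and ${\cal H}(x)-{\cal H}(x-h)$ as genuine elements of $\mathscr{T}^2(\mathbb R)$, and second, convergence of the two difference quotients to a common limit in $d_{{\rm HY}}$. What you verify instead is that the endpoint derivatives exist and that the candidate limiting interval has correctly ordered ends. That amounts to invoking the \emph{converse} of Theorem \ref{thm:MNpara}, which the paper states only in the forward direction (T2-differentiability $\Rightarrow$ parametric form); the converse is a characterization-type statement that requires extra work --- existence of the H-differences of ${\cal H}$ at nearby points, the stacking of the resulting intervals over $\alpha$ and $\beta$ into valid fuzzy numbers, and uniformity of the endpoint convergence in $\alpha,\beta$ so that pointwise limits of endpoints upgrade to a limit in the sup-type metric --- and it is nowhere established in the paper. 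To close the gap you would either have to prove that converse or, more economically, do what the paper does: manufacture ${\cal H}(x+h)-{\cal H}(x)$ explicitly as ${\cal W}_1(x,h)+{\cal W}_3(x,h)$ from the H-differences already guaranteed by the differentiability hypotheses on ${\cal F}$, ${\cal G}$ together with the assumed existence of ${\cal F}(x)-{\cal G}(x)$. The same remark applies one level up: the paper gets (\ref{eq:F-Gdd}) by combining (\ref{eq:F-Gd}) with Theorem \ref{thm:F+GsHdd} rather than by rereading Theorem \ref{thm:FT2sdba} at the endpoint level.
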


\begin{proof}
We prove only (\ref{eq:F-Gd}) and (\ref{eq:F-Gdd}) in the first case. 
The other cases and equations can be similarly proved.

Let us first prove (\ref{eq:F-Gd}) in the first case.
Since, on $I$, ${\cal F}$ is (1)-T2-differentiable and ${\cal G}$ is (2)-T2-differentiable, ${\cal F}(x+h)-{\cal F}(x)$, ${\cal F}(x)-{\cal F}(x-h)$, ${\cal G}(x)-{\cal G}(x+h)$ and ${\cal G}(x-h)-{\cal G}(x)$ exist for any $x\in I$.
Moreover, there exists a type-2 fuzzy number-valued function ${\cal W}_1$ such that
\begin{align*}
{\cal F}(x+h)&={\cal F}(x)+{\cal W}_1(x,h), \\
{\cal F}(x)&={\cal F}(x-h)+{\cal W}_2(x,h),
\end{align*}
for $h>0$, with
\begin{align*}
\lim_{h\downarrow 0}\frac{{\cal W}_1(x,h)}{h}
=\lim_{h\downarrow 0}\frac{{\cal W}_2(x,h)}{h}
={\cal F}^{\dag}(x)
\end{align*}
for $x\in I$ and
\begin{align*}
{\cal G}(x)&={\cal G}(x+h)+{\cal W}_3(x,h), \\
{\cal G}(x-h)&={\cal G}(x)+{\cal W}_4(x,h),
\end{align*}
for $h>0$, with 
\begin{align*}
\lim_{h\downarrow 0}\frac{{\cal W}_3(x,h)}{h}
=\lim_{h\downarrow 0}\frac{{\cal W}_4(x,h)}{h}
=(-1){\cal G}^{\ddag}(x)
\end{align*}
for $x\in I$.
Thus, we have
\begin{align*}
{\cal F}(x+h)+{\cal G}(x)={\cal F}(x)+{\cal G}(x+h)+{\cal W}_1(x,h)+{\cal W}_3(x,h),
\end{align*}
but we obtain
\begin{align*}
{\cal F}(x+h)-{\cal G}(x+h)={\cal F}(x)-{\cal G}(x)+{\cal W}_1(x,h)+{\cal W}_3(x,h)
\end{align*}
since ${\cal F}(x)-{\cal G}(x)$ exists for any $x\in I$ by the assumption.
This implies that $\{{\cal F}(x+h)-{\cal G}(x+h)\}-\{{\cal F}(x)-{\cal G}(x)\}$ exists and
\begin{align}
\label{eq:fracFh-Gh}
\frac{\{{\cal F}(x+h)-{\cal G}(x+h)\}-\{{\cal F}(x)-{\cal G}(x)\}}{h}=\frac{{\cal W}_1(x,h)}{h}+\frac{{\cal W}_3(x,h)}{h}
\end{align}
for any $x\in I$.
Similarly, we can find the existence of $\{{\cal F}(x)-{\cal G}(x)\}-\{{\cal F}(x-h)-{\cal G}(x-h)\}$ and obtain
\begin{align}
\label{eq:fracF-G}
\frac{\{{\cal F}(x)-{\cal G}(x)\}-\{{\cal F}(x-h)-{\cal G}(x-h)\}}{h}=\frac{{\cal W}_2(x,h)}{h}+\frac{{\cal W}_4(x,h)}{h}
\end{align}
for any $x\in I$.
Both the left-hand sides of (\ref{eq:fracFh-Gh}) and (\ref{eq:fracF-G}) have the common limit $({\cal F}-{\cal G})^{\dag}$ as $h\downarrow 0$.
Similarly, both the right-hand sides of (\ref{eq:fracFh-Gh}) and (\ref{eq:fracF-G}) have the common limit ${\cal F}^{\dag}+(-1){\cal G}^{\ddag}$ as $h\downarrow 0$.
Hence, we have obtained (\ref{eq:F-Gd}).

Let us next prove (\ref{eq:F-Gdd}) in the first case.
We can however derive it from (\ref{eq:F-Gd}) and Theorem \ref{thm:F+GsHdd} immediately.

This completes the proof.
\end{proof}

\begin{thm}
\label{thm:cftimesff}
Let $f:I\to \mathbb R$ be second-order differentiable and ${\cal G}:I\to \mathscr{T}^2(\mathbb R)$ be second-order T2-differentiable on $I$.
Then, 
\begin{itemize}
\item[1)] if $f(x)f'(x)>0$ and ${\cal G}$ is (1)-T2-differentiable, then $f{\cal G}$ is (1)-T2-differentiable and
\begin{align}
(f{\cal G})^{\dag}(x)
=f'(x){\cal G}(x)+f(x){\cal G}^{\dag}(x)
\end{align}
for $x\in I$;
\item[2)] if $f(x)f'(x)<0$ and ${\cal G}$ is (2)-T2-differentiable, then $f{\cal G}$ is (2)-T2-differentiable and
\begin{align}
\label{eq:f.Gdd}
(f{\cal G})^{\ddag}(x)
=f'(x){\cal G}(x)+f(x){\cal G}^{\ddag}(x)
\end{align}
for $x\in I$;
\item[3)] if $f(x)f'(x)>0$, $f'(x)f''(x)>0$ and ${\cal G}$ is (1,1)-T2-differentiable, then $f{\cal G}$ is (1,1)-T2-differentiable and
\begin{align}
(f{\cal G})^{\dag\dag}(x)
=f''(x){\cal G}(x)+2f'(x){\cal G}^{\dag}(x)+f(x){\cal G}^{\dag\dag}(x)
\end{align}
for $x\in I$;
\item[4)] if $f(x)f'(x)<0$, $f'(x)f''(x)<0$ and ${\cal G}$ is (2,2)-T2-differentiable, then $f{\cal G}$ is (2,2)-T2-differentiable and
\begin{align}
\label{eq:f.Gdddd}
(f{\cal G})^{\ddag\ddag}(x)
=f''(x){\cal G}(x)+2f'(x){\cal G}^{\ddag}(x)+f(x){\cal G}^{\ddag\ddag}(x)
\end{align}
for $x\in I$.
\end{itemize}
\end{thm}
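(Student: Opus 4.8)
The plan is to push every assertion down to the parametric $(\alpha,\beta)$-cut level, where the four scalar-valued endpoint functions of ${\cal G}$ are ordinary differentiable functions and the only genuine content is the bookkeeping of signs. Throughout I write the $(\alpha,\beta)$-cut of ${\cal G}(x)$ as $[{\cal G}(x)]_{\beta}^{\alpha}=\langle [\underline{G}_{\beta,-,\alpha}(x),\underline{G}_{\beta,+,\alpha}(x)],\,[\overline{G}_{\beta,-,\alpha}(x),\overline{G}_{\beta,+,\alpha}(x)]\rangle$, and I recall from Definition \ref{df:+kop} together with the type-1 rule in Definition \ref{df:+kop1} that multiplying ${\cal G}(x)$ by a crisp scalar acts on each of the lower and upper membership functions with a sign-dependent swap of the two endpoints. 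Since each endpoint of $f{\cal G}$ after cutting is a product $f\cdot(\text{endpoint of }{\cal G})$, the classical Leibniz rule applies verbatim to it; the whole proof is therefore to verify that the sign hypotheses make the classical formula coincide, as type-2 fuzzy functions, with the claimed right-hand sides. I would carry out the lower membership function in detail and remark that the upper one is identical.

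For part 1, note that $f(x)f'(x)>0$ splits into $f,f'>0$ and $f,f'<0$, and in either sub-case $f$ and $f'$ share one sign. When $f>0$ the lower $\alpha$-cut of $f{\cal G}$ is $[f\underline{G}_{\beta,-,\alpha},f\underline{G}_{\beta,+,\alpha}]$, whose endpoints differentiate to $(f\underline{G}_{\beta,\mp,\alpha})'=f'\underline{G}_{\beta,\mp,\alpha}+f\underline{G}'_{\beta,\mp,\alpha}$ for either sign; since ${\cal G}$ is (1)-T2-differentiable, Theorem \ref{thm:MNpara} gives $[\underline{G}^{\dag}_{\beta}]_{\alpha}=[\underline{G}'_{\beta,-,\alpha},\underline{G}'_{\beta,+,\alpha}]$, and because $f,f'>0$ both $f'{\cal G}$ and $f{\cal G}^{\dag}$ keep their orientation, so summing their lower $\alpha$-cuts reproduces exactly $[(f\underline{G}_{\beta,-,\alpha})',(f\underline{G}_{\beta,+,\alpha})']$. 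When $f,f'<0$ the scalar multiple swaps the two endpoints, but it swaps them the \emph{same} way in $f{\cal G}$, in $f'{\cal G}$ and in $f{\cal G}^{\dag}$, so after the swap the three contributions still line up term by term; this is precisely where $ff'>0$ (rather than merely $f>0$) is needed, since a sign clash between $f$ and $f'$ would route the left endpoint of $f'{\cal G}$ to $f'\underline{G}_{\beta,+,\alpha}$ and break the match. Equality of the two type-2 fuzzy functions then follows from the equality criterion on $(\alpha,\beta)$-cuts. Part 2 is the mirror image: $ff'<0$ forces opposite signs, ${\cal G}$ is (2)-T2-differentiable so Theorem \ref{thm:MNpara} supplies the swapped form $[\underline{G}^{\ddag}_{\beta}]_{\alpha}=[\underline{G}'_{\beta,+,\alpha},\underline{G}'_{\beta,-,\alpha}]$, and the same endpoint computation shows $f{\cal G}$ is (2)-T2-differentiable with the stated derivative.

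For parts 3 and 4, I would first combine the hypotheses: $ff'>0$ gives $f,f'$ a common sign and $f'f''>0$ gives $f',f''$ a common sign, so $f,f',f''$ all share one sign, and that single sign governs every scalar multiplication on the right-hand side. Using Theorem \ref{thm:FT2sdba} for the (1,1)-form, $[\underline{G}^{\dag\dag}_{\beta}]_{\alpha}=[\underline{G}''_{\beta,-,\alpha},\underline{G}''_{\beta,+,\alpha}]$, and applying the ordinary second-order Leibniz rule $(f\underline{G}_{\beta,\mp,\alpha})''=f''\underline{G}_{\beta,\mp,\alpha}+2f'\underline{G}'_{\beta,\mp,\alpha}+f\underline{G}''_{\beta,\mp,\alpha}$ to each endpoint, I match this, endpoint by endpoint, against the lower $\alpha$-cut of $f''{\cal G}+2f'{\cal G}^{\dag}+f{\cal G}^{\dag\dag}$; the common sign of $f,f',f''$ guarantees that all three summands keep (or all reverse) their orientation together, so the sum is computed through Definition \ref{df:+kop} with no cross terms, and the identity follows. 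Part 4 is the same computation with the (2,2)-form and a common negative orientation.

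The main obstacle I anticipate is not the algebra but justifying \emph{existence}: to invoke Theorems \ref{thm:MNpara} and \ref{thm:FT2sdba} one must first know that $f{\cal G}$ is genuinely (1)- (resp. (1,1)-, (2)-, (2,2)-) T2-differentiable, i.e. that the required T2-Hukuhara differences of Definition \ref{df:T2Hd} actually exist and the $d_{{\rm HY}}$-limit converges, rather than merely matching formal parametric expressions. I would secure this by checking directly that the sign hypotheses keep the left endpoint below the right endpoint after differentiation for every $\alpha,\beta$, so that the candidate intervals are legitimate cut sets and the Hukuhara differences exist; only once existence and the correct derivative type are in hand does the parametric characterization legitimately deliver the four Leibniz identities.
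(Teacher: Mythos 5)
Your route is genuinely different from the paper's. The paper never descends to $(\alpha,\beta)$-cuts: it works directly with the definition of the T2-derivative, writing $f(x)=f(x+h)+\varepsilon_1(x,h)$ and ${\cal G}(x)={\cal G}(x+h)+{\cal W}_1(x,h)$, using the sign hypotheses to guarantee that $\varepsilon_1(x,h)>0$ (so that the product $\{f(x+h)+\varepsilon_1\}\{{\cal G}(x+h)+{\cal W}_1\}$ expands distributively into a sum of nonnegative-scalar multiples of type-2 fuzzy numbers), and then reads off that the Hukuhara difference $f(x){\cal G}(x)-f(x+h){\cal G}(x+h)$ \emph{exists} and equals $f(x+h){\cal W}_1+\varepsilon_1{\cal G}(x+h)+\varepsilon_1{\cal W}_1$; dividing by $-h$ and passing to the limit in $d_{\rm HY}$ gives existence of $(f{\cal G})^{\ddag}$ and the Leibniz formula in one stroke. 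Your endpoint computation buys much lighter algebra and makes the role of the sign conditions (no orientation clash between $f$, $f'$, $f''$) very transparent, but it pays for this by deferring exactly the point the paper's argument is built to settle.

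That deferred point is where your proposal still has a real gap. Theorems~\ref{thm:MNpara} and~\ref{thm:FT2sdba} are stated only in the direction ``T2-differentiable $\Rightarrow$ parametric form''; the paper contains no converse, so matching parametric expressions for $(f{\cal G})^{\dag}$ against those of $f'{\cal G}+f{\cal G}^{\dag}$ does not by itself establish that $f{\cal G}$ is (1)-T2-differentiable. Moreover, the check you propose --- that the left endpoint stays below the right endpoint after differentiation --- is not the relevant condition. What must be verified is (i) that the T2-Hukuhara differences $f(x+h){\cal G}(x+h)-f(x){\cal G}(x)$ and $f(x){\cal G}(x)-f(x-h){\cal G}(x-h)$ exist, which at cut level is the width-monotonicity condition that $x\mapsto |f(x)|\cdot\mathrm{len}\bigl([\underline{G}_{\beta}(x)]_{\alpha}\bigr)$ (and likewise for $\overline{G}_{\beta}$) is nondecreasing for the dagger case and nonincreasing for the double-dagger case, and (ii) that the resulting difference quotients converge in the metric $d_{\rm HY}$, not merely endpointwise in each fixed $(\alpha,\beta)$. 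The sign hypotheses do deliver (i) --- e.g.\ in case 1, $ff'>0$ makes $|f|$ increasing while (1)-T2-differentiability of ${\cal G}$ makes its cut lengths nondecreasing --- but writing this out, together with (ii), essentially reproduces the increment computation the paper performs. Until that step is made explicit, your argument establishes the formula for the derivative only conditionally on the differentiability of $f{\cal G}$, which is part of what the theorem asserts.
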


\begin{proof}
We prove only 2) and 4) because the other two cases are proved in the same way.
\begin{itemize}
\item[2)] We suppose that $f(x)>0$ and $f'(x)<0$.
Let $h>0$ be a crisp number.
Since $f$ is differentiable, there exist $\varepsilon_j(x,h)\ (j=1,2)$ such that
\begin{align}
f(x)&=f(x+h)+\varepsilon_1(x,h), \label{eq:1approx}\\
f(x-h)&=f(x)+\varepsilon_2(x,h). \label{eq:2approx}
\end{align}
(Recall the first-order approximation of the differentiable function.)
Remark that $\varepsilon_1(x,h)>0$ since $\varepsilon_1(x,h)=f(x)-f(x+h)$ and $f$ is monotone decreasing.
The same applies to $\varepsilon_2(x,h)$.
Since ${\cal G}$ is (2)-T2-differentiable on $I$, ${\cal G}(x)-{\cal G}(x+h)$ and ${\cal G}(x-h)-{\cal G}(x)$ exist for any $x\in I$, that is, there exist type-2 fuzzy functions ${\cal W}_j\ (j=1,2)$ such that
\begin{align*}
{\cal G}(x)&={\cal G}(x+h)+{\cal W}_1(x,h), \\
{\cal G}(x-h)&={\cal G}(x)+{\cal W}_2(x,h).
\end{align*}
We thus have
\begin{align}
{\cal G}(x)&={\cal G}(x+h)+{\cal W}_1(x,h), \label{eq:GGx+h1}\\
{\cal G}(x-h)&={\cal G}(x)+{\cal W}_2(x,h). \label{eq:GGx+h2}
\end{align}
One has
\begin{align*}
f(x){\cal G}(x)&=\{f(x+h)+\varepsilon_1(x,h)\}\{{\cal G}(x+h)+{\cal W}_1(x,h)\} \\
&=f(x+h){\cal G}(x+h)+f(x+h){\cal W}_1(x,h)+\varepsilon_1(x,h){\cal G}(x+h)+\varepsilon_1(x,h){\cal W}_1(x,h)
\end{align*}
by virtue of (\ref{eq:1approx}) and (\ref{eq:GGx+h1}), so $f(x){\cal G}(x)-f(x+h){\cal G}(x+h)$ exists and
\begin{align}
\label{eq:fGx-fGx+h}
\begin{aligned}
&\frac{f(x){\cal G}(x)-f(x+h){\cal G}(x+h)}{-h} \\
&\qquad =f(x+h)\frac{{\cal W}_1(x,h)}{-h}+\frac{\varepsilon_1(x,h)}{-h}{\cal G}(x+h)+\frac{\varepsilon_1(x,h)}{-h}{\cal W}_1(x,h)
\end{aligned}
\end{align}
for any $x\in I$.
Similarly, from (\ref{eq:2approx}) and (\ref{eq:GGx+h2}), $f(x-h){\cal G}(x-h)-f(x){\cal G}(x)$ exists and we also have
\begin{align}
\label{eq:fGx-h-fGx}
\begin{aligned}
&\frac{f(x-h){\cal G}(x-h)-f(x){\cal G}(x)}{-h} \\
&\qquad =f(x-h)\frac{{\cal W}_2(x,h)}{-h}+\frac{\varepsilon_2(x,h)}{-h}{\cal G}(x-h)+\frac{\varepsilon_2(x,h)}{-h}{\cal W}_2(x,h)
\end{aligned}
\end{align}
for any $x\in I$.
Theorem \ref{thm:difconti} implies ${\cal G}\in {\cal C}(I;\mathscr{T}^2(\mathbb R))$, so ${\cal W}_j(x,h)\to 0\ (j=1,2)$ as $h\downarrow 0$.
Therefore (\ref{eq:f.Gdd}) is obtained by letting $h\downarrow 0$ in both sides of (\ref{eq:fGx-fGx+h}) and (\ref{eq:fGx-h-fGx}).
We can also gain the same result in the case that $f(x)<0$ and $f'(x)>0$.
Hence, (\ref{eq:f.Gdd}) has been derived.
\item[4)] (\ref{eq:f.Gdddd}) is immediately proved by repeating (\ref{eq:f.Gdd}).
\end{itemize}
This completes the proof.
\end{proof}

\subsection{Differentiation of Composite Type-2 Fuzzy Number-valued Functions}
We finally deal with the composition of crisp and type-1 / type-2 fuzzy functions and its derivative.

\begin{thm}
\label{thm:compo}
Let $f:I\to \mathbb R$ be differentiable and $G:\mathbb R\to \mathscr{T}^1(\mathbb R)$ T1-differentiable.
We consider the type-1 fuzzy composite function $G\circ f:I\to \mathscr{T}^1(\mathbb R)$.
Suppose that T1-Hukuhara differences 
\begin{align*}
(G\circ f)(x+h)-(G\circ f)(x)\quad {\rm and}\quad (G\circ f)(x)-(G\circ f)(x-h)
\end{align*}
exist for any $x\in I$ and $h>0$ sufficiently small.
Then, $G\circ f$ is T1-differentiable on $I$ and
\begin{align}
(G\circ f)^{\dag}&=G^{\dag}(f(x))f'(x), \label{eq:compo1}\\
(G\circ f)^{\ddag}&=G^{\ddag}(f(x))f'(x). \label{eq:compo2}
\end{align}
Moreover, the same result holds for ${\cal G}:\mathbb R\to \mathscr{T}^2(\mathbb R)$.
\end{thm}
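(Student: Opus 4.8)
The plan is to reduce everything to parametric form and to invoke the ordinary (crisp) chain rule endpoint by endpoint, using Theorem~\ref{thm:Ffda} as the dictionary between Hukuhara derivatives and ordinary derivatives of the endpoint functions. Writing $[G(y)]_{\alpha}=[G_{-,\alpha}(y),G_{+,\alpha}(y)]$, the composite has $\alpha$-cut
\begin{align*}
[(G\circ f)(x)]_{\alpha}=[(G_{-,\alpha}\circ f)(x),\ (G_{+,\alpha}\circ f)(x)],
\end{align*}
because $G$ is evaluated at the \emph{crisp} argument $f(x)$ and so acts separately on each endpoint. First I would use the standing hypothesis that the T1-Hukuhara differences $(G\circ f)(x+h)-(G\circ f)(x)$ and $(G\circ f)(x)-(G\circ f)(x-h)$ exist to guarantee that the forward and backward difference quotients are legitimate fuzzy numbers; taking their common limit in $d_{{\rm H}}$ (the two endpoint limits coincide with the forward and backward crisp derivatives, which agree since $f$ and the $G_{\pm,\alpha}$ are differentiable) shows that $G\circ f$ is T1-differentiable, so that Theorem~\ref{thm:Ffda} becomes applicable to it.

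Next I would compute the first parametric form. Theorem~\ref{thm:Ffda} applied to $G\circ f$ gives
\begin{align*}
[(G\circ f)^{\dag}(x)]_{\alpha}=[(G_{-,\alpha}\circ f)'(x),\ (G_{+,\alpha}\circ f)'(x)],
\end{align*}
and the crisp chain rule turns each endpoint into $(G_{\pm,\alpha}\circ f)'(x)=G_{\pm,\alpha}'(f(x))\,f'(x)$. On the other side, Theorem~\ref{thm:Ffda} gives $[G^{\dag}(f(x))]_{\alpha}=[G_{-,\alpha}'(f(x)),G_{+,\alpha}'(f(x))]$, so scalar multiplication by $f'(x)$ in the sense of Definition~\ref{df:+kop1} is meant to reproduce exactly the interval just obtained. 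Matching the two yields (\ref{eq:compo1}), and repeating the computation with the \emph{second} parametric forms of Theorem~\ref{thm:Ffda} yields (\ref{eq:compo2}).

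The hard part will be the sign of $f'(x)$. Scalar multiplication by $f'(x)$ interchanges the two endpoints when $f'(x)<0$, so the naive identification of $[G_{-,\alpha}'(f(x))f'(x),\,G_{+,\alpha}'(f(x))f'(x)]$ with $[f'(x)G^{\dag}(f(x))]_{\alpha}$ could break down. This is exactly where the existence hypothesis earns its keep: for the parametric difference quotient to be a genuine fuzzy number its left endpoint must not exceed its right one, and dividing by $h$ and letting $h\downarrow 0$ forces $G_{-,\alpha}'(f(x))f'(x)\le G_{+,\alpha}'(f(x))f'(x)$; combined with $G_{-,\alpha}'\le G_{+,\alpha}'$ this pins down the sign behaviour and makes the scalar-multiplication convention consistent with the stated formula. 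I would carry out this ordering check explicitly rather than glossing over it.

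Finally, the type-2 assertion follows by transporting the argument through each $\beta$-plane. Because the sum, scalar multiple and T2-Hukuhara difference of type-2 fuzzy numbers are all defined $\beta$-cut-wise through the upper and lower membership functions (Definition~\ref{df:+kop}), and because Theorem~\ref{thm:MNpara} expresses the T2-derivative in terms of the T1-derivatives of those membership functions, it suffices to apply the type-1 result to $\underline{G}_{\beta}\circ f$ and $\overline{G}_{\beta}\circ f$ separately and to reassemble the $\beta$-cuts. This produces $({\cal G}\circ f)^{\dag}={\cal G}^{\dag}(f(x))f'(x)$ together with its double-dagger analogue with no new ideas required.
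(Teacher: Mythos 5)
Your argument is correct and rests on the same core idea as the paper's proof: reduce to the $\alpha$-cut endpoints $G_{\pm,\alpha}\circ f$ and invoke the crisp chain rule. The execution differs in two worthwhile respects. The paper works at the level of difference quotients, multiplying and dividing each endpoint quotient by $f(x+h)-f(x)$ and letting $h\downarrow 0$; you instead first secure T1-differentiability of $G\circ f$ and then read off its parametric form from Theorem \ref{thm:Ffda} combined with the ordinary chain rule for $G_{\pm,\alpha}\circ f$. Your route sidesteps the classical defect of the multiply-and-divide trick (the inserted factor is undefined wherever $f(x+h)=f(x)$, e.g.\ near critical points of $f$), which the paper's proof does not address. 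More significantly, you are right that the sign of $f'(x)$ is the delicate point and that the paper glosses over it: by Definition \ref{df:+kop1}, when $f'(x)<0$ the $\alpha$-cut of $G^{\dag}(f(x))f'(x)$ is $[f'(x)G'_{+,\alpha}(f(x)),\,f'(x)G'_{-,\alpha}(f(x))]$, with the endpoints swapped relative to the interval $[G'_{-,\alpha}(f(x))f'(x),\,G'_{+,\alpha}(f(x))f'(x)]$ that the limit of difference quotients produces. Your observation that the existence of the Hukuhara differences forces $G'_{-,\alpha}(f(x))f'(x)\le G'_{+,\alpha}(f(x))f'(x)$, which together with $G'_{-,\alpha}\le G'_{+,\alpha}$ collapses the two orderings (so that $G^{\dag}(f(x))$ is effectively crisp wherever $f'(x)<0$), is exactly the check needed to make the stated identity literally true, and it is absent from the paper. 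Your $\beta$-plane reduction for the type-2 case coincides with the paper's one-line remark that the type-2 proof is the same.
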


\begin{proof}
We prove only for type-1 $G$ because we can prove for type-2 ${\cal G}$ in the same way.
Let $h>0$ be a crisp number.
The $\alpha$-cut set of the right difference quotient of $(G\circ f)$ is 
\begin{align*}
&\left[\frac{(G\circ f)(x+h)-(G\circ f)(x)}{h}\right]_{\alpha} \\
&=\left[\frac{(G\circ f)_{-,\alpha}(x+h)-(G\circ f)_{-,\alpha}(x)}{h},\ \frac{(G\circ f)_{+,\alpha}(x+h)-(G\circ f)_{+,\alpha}(x)}{h}\right] \\
&=\left[\frac{G_{-,\alpha}(f(x+h))-G_{-,\alpha}(f(x))}{h},\ \frac{G_{+,\alpha}(f(x+h))-G_{+,\alpha}(f(x))}{h}\right] \\
&=\left[\frac{G_{-,\alpha}(f(x+h))-G_{-,\alpha}(f(x))}{f(x+h)-f(x)}\frac{f(x+h)-f(x)}{h},\ \frac{G_{+,\alpha}(f(x+h))-G_{+,\alpha}(f(x))}{f(x+h)-f(x)}\frac{f(x+h)-f(x)}{h}\right].
\end{align*}
Similarly, the $\alpha$-cut set of the left difference quotient of $(G\circ f)$ is 
\begin{align*}
&\left[\frac{(G\circ f)(x)-(G\circ f)(x-h)}{h}\right]_{\alpha} \\
&=\left[\frac{G_{-,\alpha}(f(x))-G_{-,\alpha}(f(x-h))}{f(x)-f(x-h)}\frac{f(x)-f(x-h)}{h},\ \frac{G_{+,\alpha}(f(x))-G_{+,\alpha}(f(x-h))}{f(x)-f(x-h)}\frac{f(x)-f(x-h)}{h}\right].
\end{align*}
The rightmost-hand sides of the above two equations converge as $h\downarrow 0$, since $f(x\pm h)\to f(x)$ as $h\downarrow 0$ by virtue of the continuity of $f$.
Thus (\ref{eq:compo1}) can be obtained. 
(\ref{eq:compo2}) is similar.
Hence, this completes the proof.
\end{proof}

\section{Type-2 Fuzzy Initial Value Problems for Second-order T2FDEs \label{sec:ExT2FDE}}
We actually solve, in this section, some concrete type-2 fuzzy initial value problems for second-order T2FDEs.
We write all first-order Hukuhara derivatives $^{\dag}$ and $^{\ddag}$ together as ${\rm D}$.
That is, ${\rm D}^2$ denotes all second-order Hukuhara derivatives $^{\dag\dag}$, $^{\dag\ddag}$, $^{\ddag\dag}$ and $^{\ddag\ddag}$ together.
Italic numbers
\begin{align*}
{\it 0},{\it 1},{\it 2},{\it 3},{\it 4},{\it 5},{\it 6},{\it 7},{\it 8},{\it 9}
\end{align*}
stand for concrete type-2 fuzzy numbers.
We abbreviate the type-2 fuzzy initial value problem (resp. condition) as T2FIVP (resp. T2FIVC) in this subsection.

We consider T2FIVPs on $I=[0,r]$ for some $r>0$ or $I=[0,+\infty)$:
\begin{align}
\label{eq:gT2FDE1}
\begin{cases}
{\rm D}^2{\cal Y}(x)+a{\rm D}{\cal Y}(x)+b{\cal Y}(x)=0, \\
{\cal Y}(0)={\cal U}\in \mathscr{T}^2(\mathbb R), \\
{\rm D}{\cal Y}(0)={\cal V}\in \mathscr{T}^2(\mathbb R).
\end{cases}
\end{align}

\begin{df}
Let ${\cal Y}:I\to \mathscr{T}^2(\mathbb R)$ be second-order T2-differentiable.
We denote the $\beta$-cut set of ${\cal Y}={\cal Y}(x)$ by
\begin{align*}
[{\cal Y}(x)]_{\beta}=\left\langle \underline{Y}_{\beta}(x),\ \overline{Y}_{\beta}(x)\right\rangle.
\end{align*}
Then, ${\cal Y}$ is the $(i,j)$-type-2 fuzzy solution of (\ref{eq:gT2FDE1}), $(i,j)\in \{1,2\}^2$, if and only if, for each $\beta\in [0,1]$, 
\begin{itemize}
\item[i)] ${\rm D}_{i}\underline{Y}_{\beta}$, ${\rm D}_{i}\overline{Y}_{\beta}$, ${\rm D}^2_{i,j}\underline{Y}_{\beta}$, ${\rm D}^2_{i,j}\overline{Y}_{\beta}$ exist on $I$, and
\item[ii)] $\underline{Y}_{\beta}$ and $\overline{Y}_{\beta}:I\to \mathscr{T}^1(\mathbb R)$ satisfy
\begin{align*}
\begin{cases}
[{\rm D}^2_{i,j}\underline{Y}_{\beta}(x)]_{\alpha}+[a{\rm D}_{i}\underline{Y}_{\beta}(x)]_{\alpha}+[b\underline{Y}_{\beta}(x)]_{\alpha}=0,\quad x\in I, \\
\underline{Y}_{\beta}(0)=\underline{u}_{\beta}\in \mathscr{T}^1(\mathbb R), \\
{\rm D}_{i}\underline{Y}_{\beta}(0)=\underline{v}_{\beta}\in \mathscr{T}^1(\mathbb R)
\end{cases}
\end{align*}
and
\begin{align*}
\begin{cases}
[{\rm D}^2_{i,j}\overline{Y}_{\beta}(x)]_{\alpha}+[a{\rm D}_{i}\overline{Y}_{\beta}(x)]_{\alpha}+[b\overline{Y}_{\beta}(x)]_{\alpha}=0,\quad x\in I, \\
\overline{Y}_{\beta}(0)=\overline{u}_{\beta}\in \mathscr{T}^1(\mathbb R), \\
{\rm D}_{i}\overline{Y}_{\beta}(0)=\overline{v}_{\beta}\in \mathscr{T}^1(\mathbb R)
\end{cases}
\end{align*}
for any $\alpha\in [0,1]$, respectively.
\end{itemize}
\end{df}

{\small
\begin{rem}
The type-2 fuzzy solution generally becomes the type-1 fuzzy solution if $\beta=1$; it becomes the crisp solution if $\alpha=\beta=1$.
\end{rem}
}

We solve problems in the case of crisp coefficients.
We have four candidate solutions for type-1 fuzzy differential equations of order 2, but we have eight of them for type-2 ones of order 2.

Recall Definition \ref{df:+kop1} and \ref{df:+kop} for operations.
`({\tt A.C.})' represents `ANSWER COMPLETED'.

\subsection{Case of Positive Coefficients}
We begin with easy problems, that is, T2FIVPs of order 2 in the case of crisp coefficients.

\begin{prob}
\label{prob:P1}
Let ${\cal Y}:[0,1]\to \mathscr{T}^2(\mathbb R)$ be a type-2 fuzzy function.
Then, solve T2FIVPs:
\begin{numcases}
{}
{\rm D}^2{\cal Y}(x)+3{\rm D}{\cal Y}(x)=0, \label{eq:T2FDE1}\\
{\cal Y}(0)={\it 5}\in \mathscr{QT}^2(\mathbb R), \\
{\rm D}{\cal Y}(0)={\it 1}\in \mathscr{QT}^2(\mathbb R).
\end{numcases}
\end{prob}

\begin{sol}
We consider the $(\alpha,\beta)$-cut set of (\ref{eq:T2FDE1}):
\begin{align*}
\left\langle[\underline{Y}^{\dag\dag}_{\beta}(x)]_{\alpha}+3[\underline{Y}^{\dag}_{\beta}(x)]_{\alpha},\ [\overline{Y}^{\dag\dag}_{\beta}(x)]_{\alpha}+3[\overline{Y}^{\dag}_{\beta}(x)]_{\alpha}\right\rangle=0,
\quad \alpha\in [0,1].
\end{align*}
This can be solved by (1,1) or (2,2)-T1-differentiation, so we have the following two T1FIVPs:
\begin{numcases}
{}
[\underline{Y}^{\dag\dag}_{\beta}(x)]_{\alpha}+3[\underline{Y}^{\dag}_{\beta}(x)]_{\alpha}=[\underline{Y}''_{\beta,-,\alpha}(x)+3\underline{Y}'_{\beta,-,\alpha}(x),\ \underline{Y}''_{\beta,+,\alpha}(x)+3\underline{Y}'_{\beta,+,\alpha}(x)]=0, \label{eq:udd3d}\\
\underline{Y}_{\beta}(0)=\underline{5}_{\beta}\in \mathscr{T}^1(\mathbb R), \\
\underline{Y}_{\beta}^{\dag}(0)=\underline{1}_{\beta}\in \mathscr{T}^1(\mathbb R),
\end{numcases}
and
\begin{numcases}
{}
[\overline{Y}^{\dag\dag}_{\beta}(x)]_{\alpha}+3[\overline{Y}^{\dag}_{\beta}(x)]_{\alpha}=[\overline{Y}''_{\beta,-,\alpha}(x)+3\overline{Y}'_{\beta,-,\alpha}(x),\ \overline{Y}''_{\beta,+,\alpha}(x)+3\overline{Y}'_{\beta,+,\alpha}(x)]=0, \label{eq:odd3d}\\
\overline{Y}_{\beta}(0)=\overline{5}_{\beta}\in \mathscr{T}^1(\mathbb R), \\
\overline{Y}_{\beta}^{\dag}(0)=\overline{1}_{\beta}\in \mathscr{T}^1(\mathbb R).
\end{numcases}
We can thus solve the given T2FIVP because the solution method of type-1 fuzzy differential equations is well known.

(\ref{eq:udd3d}) can be solved in the form of
\begin{align}
\label{eq:uYbpma}
\underline{Y}_{\beta,\pm,\alpha}(x)=\underline{C}_{1,\pm}e^{-3x}+\underline{C}_{2,\pm},
\end{align}
thus,
\begin{align}
\label{eq:uYdbpma}
\underline{Y}_{\beta,\pm,\alpha}'(x)=-3\underline{C}_{1,\pm}e^{-3x},
\end{align}
where these equations represent two equations, one for the upper sign and the other for the lower sign.
Similarly, (\ref{eq:odd3d}) can be solved in the form of
\begin{align}
\label{eq:oYbpma}
\overline{Y}_{\beta,\pm,\alpha}(x)=\overline{C}_{1,\pm}e^{-3x}+\overline{C}_{2,\pm},
\end{align}
thus,
\begin{align}
\label{eq:oYdbpma}
\overline{Y}_{\beta,\pm,\alpha}'(x)=-3\overline{C}_{1,\pm}e^{-3x},
\end{align}
where these equations represent two equations, one for the upper sign and the other for the lower sign.

Let us determine type-2 fuzzy initial value ${\it 5}$ by setting them as the triangular quasi-type-2 fuzzy number
\begin{align*}
{\it 5}=\langle\!\langle 3.5,\ 4,\ 4.5\ ;\ 5\ ;\ 5.5,\ 6,\ 6.5\rangle\!\rangle.
\end{align*}
Since (\ref{eq:LoA0a}), (\ref{eq:LuA0a}), (\ref{eq:RuA0a}), (\ref{eq:RoA0a}), (\ref{eq:XA1a}) and (\ref{eq:YA1a}) imply that
\begin{align*}
L_{\overline{5}_{0}}^{\alpha}&=5-(1-\alpha)(5-3.5)=\frac{3}{2}\alpha+\frac{7}{2}, \\
L_{\underline{5}_{0}}^{\alpha}&=5-(1-\alpha)(5-4.5)=\frac{1}{2}\alpha+\frac{9}{2}, \\
R_{\underline{5}_{0}}^{\alpha}&=5+(1-\alpha)(5.5-5)=-\frac{1}{2}\alpha+\frac{11}{2}, \\
R_{\overline{5}_{0}}^{\alpha}&=5+(1-\alpha)(6.5-5)=-\frac{3}{2}\alpha+\frac{13}{2}, \\
X_{5_1}^{\alpha}&=5-(1-\alpha)(5-4)=\alpha+4, \\
Y_{5_1}^{\alpha}&=5+(1-\alpha)(6-5)=-\alpha+6,
\end{align*}
we have
\begin{align*}
L_{\underline{5}_{\beta}}^{\alpha}&=(\alpha+4)-(1-\beta)\left\{(\alpha+4)-\left(\frac{1}{2}\alpha+\frac{9}{2}\right)\right\}=\frac{1}{2}\alpha+\frac{1}{2}\alpha\beta-\frac{1}{2}\beta+\frac{9}{2}, \\
R_{\underline{5}_{\beta}}^{\alpha}&=(-\alpha+6)+(1-\beta)\left\{\left(-\frac{1}{2}\alpha+\frac{11}{2}\right)-(-\alpha+6)\right\}=-\frac{1}{2}\alpha-\frac{1}{2}\alpha\beta+\frac{1}{2}\beta+\frac{11}{2}
\end{align*}
and
\begin{align*}
L_{\overline{5}_{\beta}}^{\alpha}&=(\alpha+4)-(1-\beta)\left\{(\alpha+4)-\left(\frac{3}{2}\alpha+\frac{7}{2}\right)\right\}=\frac{3}{2}\alpha-\frac{1}{2}\alpha\beta+\frac{1}{2}\beta+\frac{7}{2}, \\
R_{\overline{5}_{\beta}}^{\alpha}&=(-\alpha+6)+(1-\beta)\left\{\left(-\frac{3}{2}\alpha+\frac{13}{2}\right)-(-\alpha+6)\right\}=-\frac{3}{2}\alpha+\frac{1}{2}\alpha\beta-\frac{1}{2}\beta+\frac{13}{2}
\end{align*}
from (\ref{eq:LuAba})-(\ref{eq:RuAba}) and (\ref{eq:LoAba})-(\ref{eq:RoAba}).
Therefore, it follows that
\begin{align}
[\underline{Y}_{\beta}(0)]_{\alpha}=[\underline{5}_{\beta}]_{\alpha}&=\left[\frac{1}{2}\alpha+\frac{1}{2}\alpha\beta-\frac{1}{2}\beta+\frac{9}{2},\ -\frac{1}{2}\alpha-\frac{1}{2}\alpha\beta+\frac{1}{2}\beta+\frac{11}{2}\right], \label{eq:u5ba}\\
[\overline{Y}_{\beta}(0)]_{\alpha}=[\overline{5}_{\beta}]_{\alpha}&=\left[\frac{3}{2}\alpha-\frac{1}{2}\alpha\beta+\frac{1}{2}\beta+\frac{7}{2},\ -\frac{3}{2}\alpha+\frac{1}{2}\alpha\beta-\frac{1}{2}\beta+\frac{13}{2}\right] \label{eq:o5ba}
\end{align}
from (\ref{eq:uAba}) and (\ref{eq:oAba}).
Next, let us determine type-2 fuzzy initial value ${\it 1}$ by setting them as the triangular quasi-type-2 fuzzy number
\begin{align*}
{\it 1}=\langle\!\langle -0.5,\ 0,\ 0.5\ ;\ 1\ ;\ 1.5,\ 2,\ 2.5\rangle\!\rangle.
\end{align*}
Therefore it follows that, in the same way as above, 
\begin{align}
[\underline{Y}_{\beta}^{\dag}(0)]_{\alpha}=[\underline{1}_{\beta}]_{\alpha}&=\left[\frac{1}{2}\alpha+\frac{1}{2}\alpha\beta-\frac{1}{2}\beta+\frac{1}{2},\ -\frac{1}{2}\alpha-\frac{1}{2}\alpha\beta+\frac{1}{2}\beta+\frac{3}{2}\right], \label{eq:u1ba}\\
[\overline{Y}_{\beta}^{\dag}(0)]_{\alpha}=[\overline{1}_{\beta}]_{\alpha}&=\left[\frac{3}{2}\alpha-\frac{1}{2}\alpha\beta+\frac{1}{2}\beta-\frac{1}{2},\ -\frac{3}{2}\alpha+\frac{1}{2}\alpha\beta-\frac{1}{2}\beta+\frac{5}{2}\right]. \label{eq:o1ba}
\end{align}

Substituting (\ref{eq:u5ba}) and (\ref{eq:u1ba}) for (\ref{eq:uYbpma}), we have
\begin{align}
\underline{Y}_{\beta,-,\alpha}(x)&=\left(-\frac{1}{6}\alpha-\frac{1}{6}\alpha\beta+\frac{1}{6}\beta-\frac{1}{6}\right)e^{-3x}+\left(\frac{2}{3}\alpha+\frac{2}{3}\alpha\beta-\frac{2}{3}\beta+\frac{14}{3}\right), \label{eq:uYb-a}\\
\underline{Y}_{\beta,+,\alpha}(x)&=\left(\frac{1}{6}\alpha+\frac{1}{6}\alpha\beta-\frac{1}{6}\beta-\frac{1}{2}\right)e^{-3x}+\left(-\frac{2}{3}\alpha-\frac{2}{3}\alpha\beta+\frac{2}{3}\beta+6\right).
\end{align}
Similarly, substituting (\ref{eq:o5ba}) and (\ref{eq:o1ba}) for (\ref{eq:oYbpma}), we have
\begin{align}
\overline{Y}_{\beta,-,\alpha}(x)&=\left(-\frac{1}{2}\alpha+\frac{1}{6}\alpha\beta-\frac{1}{6}\beta+\frac{1}{6}\right)e^{-3x}+\left(2\alpha-\frac{2}{3}\alpha\beta+\frac{2}{3}\beta+\frac{10}{3}\right), \\
\overline{Y}_{\beta,+,\alpha}(x)&=\left(\frac{1}{2}\alpha-\frac{1}{6}\alpha\beta+\frac{1}{6}\beta-\frac{5}{6}\right)e^{-3x}+\left(-2\alpha+\frac{2}{3}\alpha\beta-\frac{2}{3}\beta+\frac{22}{3}\right). \label{eq:oYb+a}
\end{align}
Hence, the desired solution is composed of the $(\alpha,\beta)$-cut set
\begin{align*}
[{\cal Y}(x)]_{\beta}^{\alpha}=\left\langle\left[\underline{Y}_{\beta,-,\alpha}(x),\underline{Y}_{\beta,+,\alpha}(x)\right],\ \left[\overline{Y}_{\beta,-,\alpha}(x),\overline{Y}_{\beta,+,\alpha}(x)\right]\right\rangle
\end{align*}
with (\ref{eq:uYb-a})-(\ref{eq:oYb+a}).

The same result is obtained by solving the given problem in the (1,2)-case.
({\tt A.C.})
\end{sol}

\begin{figure}[h]
\begin{center}
\includegraphics[width=5cm]{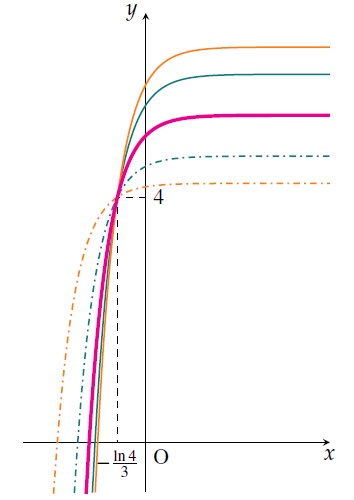}
\caption{The crisp and fuzzy solution of Problem \ref{prob:P1} if $(\alpha,\beta)=(1/3,1/2)$}
\label{fig:P1a1/3b1/2}
\end{center}
\end{figure}

{\small
\begin{rem}
Letting $\alpha=\beta=1$ in (\ref{eq:uYb-a})-(\ref{eq:oYb+a}), we have the crisp solution 
\[
\textcolor{magenta}{y(x)=-\frac{1}{3}e^{-3x}+\frac{16}{3}}
\]
to the crisp IVP:
\[
y''(x)+3y'(x)=0,\quad y(0)=5,\quad y'(0)=1.
\]
We can thus find that type-2 (or type-1) fuzzy differential equation theory is an extension to crisp one.
Also, the solution if $(\alpha,\beta)=(1/3,1/2)$ with the parametric forms
\begin{align*}
\textcolor{teal}{[\underline{Y}_{1/2}(x)]_{1/3}}&\textcolor{teal}{=\left[-\frac{1}{6}e^{-3x}+\frac{14}{3},\ -\frac{1}{2}e^{-3x}+6\right]}, \\
\textcolor{orange}{[\overline{Y}_{1/2}(x)]_{1/3}}&\textcolor{orange}{=\left[-\frac{1}{18}e^{-3x}+\frac{38}{9},\ -\frac{11}{18}e^{-3x}+\frac{58}{9}\right]}
\end{align*}
is as shown in Figure \ref{fig:P1a1/3b1/2}.
\end{rem}
}

\subsection{Case of Negative Coefficients in the Sense of Hukuhara Differences}

\begin{prob}
\label{prob:P2}
Let ${\cal Y}:[0,1]\to \mathscr{T}^2(\mathbb R)$ be a type-2 fuzzy function.
Then, solve T2FIVPs:
\begin{numcases}
{}
{\rm D}^2{\cal Y}(x)-{\cal Y}(x)=0, \label{eq:P2d2-}\\
{\cal Y}(0)={\it 5}\in \mathscr{QT}^2(\mathbb R), \\
{\rm D}{\cal Y}(0)={\it 1}\in \mathscr{QT}^2(\mathbb R).
\end{numcases}
\end{prob}

\begin{sol}
We consider the $(\alpha,\beta)$-cut set of (\ref{eq:P2d2-}):
\begin{align*}
\left\langle[\underline{Y}^{\dag\dag}_{\beta}(x)]_{\alpha}-[\underline{Y}_{\beta}(x)]_{\alpha},\ [\overline{Y}^{\dag\dag}_{\beta}(x)]_{\alpha}-[\overline{Y}_{\beta}(x)]_{\alpha}\right\rangle=0,
\quad \alpha\in [0,1].
\end{align*}
This can be solved by (1,1) or (2,2)-T1-differentiation.
So, considering (1,1)-case, we have the following two T1FIVPs:
\begin{numcases}
{}
[\underline{Y}^{\dag\dag}_{\beta}(x)]_{\alpha}-[\underline{Y}_{\beta}(x)]_{\alpha}=[\underline{Y}''_{\beta,-,\alpha}(x)-\underline{Y}_{\beta,-,\alpha}(x),\ \underline{Y}''_{\beta,+,\alpha}(x)-\underline{Y}_{\beta,+,\alpha}(x)]=0, \label{eq:P2-1u}\\
\underline{Y}_{\beta}(0)=\underline{5}_{\beta}\in \mathscr{T}^1(\mathbb R), \\
\underline{Y}_{\beta}^{\dag}(0)=\underline{1}_{\beta}\in \mathscr{T}^1(\mathbb R), \label{eq:P2uYbd0}
\end{numcases}
and
\begin{numcases}
{}
[\overline{Y}^{\dag\dag}_{\beta}(x)]_{\alpha}-[\overline{Y}_{\beta}(x)]_{\alpha}=[\overline{Y}''_{\beta,-,\alpha}(x)-\overline{Y}_{\beta,-,\alpha}(x),\ \overline{Y}''_{\beta,+,\alpha}(x)-\overline{Y}_{\beta,+,\alpha}(x)]=0, \label{eq:dd2H-1a}\\
\overline{Y}_{\beta}(0)=\overline{5}_{\beta}\in \mathscr{T}^1(\mathbb R), \\
\overline{Y}_{\beta}^{\dag}(0)=\overline{1}_{\beta}\in \mathscr{T}^1(\mathbb R). \label{eq:P2oYbd0}
\end{numcases}
That is, rewriting problems (\ref{eq:P2-1u})-(\ref{eq:P2uYbd0}) and (\ref{eq:dd2H-1a})-(\ref{eq:P2oYbd0}), we have
\begin{align}
\underline{Y}_{\beta,-,\alpha}(x)&=\underline{C}_{-}e^{-x}+\underline{D}_{-}e^{x}, \label{eq:P2first}\\
\underline{Y}_{\beta,-,\alpha}(0)&=\frac{1}{2}\alpha+\frac{1}{2}\alpha\beta-\frac{1}{2}\beta+\frac{9}{2}, \\
\underline{Y}_{\beta,-,\alpha}^{\dag}(0)&=\frac{1}{2}\alpha+\frac{1}{2}\alpha\beta-\frac{1}{2}\beta+\frac{1}{2};
\end{align}
\vspace{-7mm}
\begin{align}
\underline{Y}_{\beta,+,\alpha}(x)&=\underline{C}_{+}e^{-x}+\underline{D}_{+}e^{x}, \\
\underline{Y}_{\beta,+,\alpha}(0)&=-\frac{1}{2}\alpha-\frac{1}{2}\alpha\beta+\frac{1}{2}\beta+\frac{11}{2}, \\
\underline{Y}_{\beta,+,\alpha}^{\dag}(0)&=-\frac{1}{2}\alpha-\frac{1}{2}\alpha\beta+\frac{1}{2}\beta+\frac{3}{2};
\end{align}
\vspace{-7mm}
\begin{align}
\overline{Y}_{\beta,-,\alpha}(x)&=\overline{C}_{-}e^{-x}+\overline{D}_{-}e^{x}, \\
\overline{Y}_{\beta,-,\alpha}(0)&=\frac{3}{2}\alpha-\frac{1}{2}\alpha\beta+\frac{1}{2}\beta+\frac{7}{2}, \\
\overline{Y}_{\beta,-,\alpha}^{\dag}(0)&=\frac{3}{2}\alpha-\frac{1}{2}\alpha\beta+\frac{1}{2}\beta-\frac{1}{2};
\end{align}
\vspace{-7mm}
\begin{align}
\overline{Y}_{\beta,+,\alpha}(x)&=\overline{C}_{+}e^{-x}+\overline{D}_{+}e^{x}, \\
\overline{Y}_{\beta,+,\alpha}(0)&=-\frac{3}{2}\alpha+\frac{1}{2}\alpha\beta-\frac{1}{2}\beta+\frac{13}{2}, \\
\overline{Y}_{\beta,+,\alpha}^{\dag}(0)&=-\frac{3}{2}\alpha+\frac{1}{2}\alpha\beta-\frac{1}{2}\beta+\frac{5}{2}. \label{eq:P2last}
\end{align}
We can obtain $\underline{Y}_{\beta,\pm,\alpha}(x)$ and $\overline{Y}_{\beta,\pm,\alpha}(x)$ by solving these:
\begin{align}
[\underline{Y}_{\beta}(x)]_{\alpha}&=[\underline{Y}_{\beta,-,\alpha}(x),\ \underline{Y}_{\beta,+,\alpha}(x)] \nonumber\\
&=\left[2e^{-x}+\frac{1}{2}(\alpha+\alpha\beta-\beta+5)e^x,\ 2e^{-x}+\frac{1}{2}(-\alpha-\alpha\beta+\beta+7)e^x\right], \label{eq:P2[uYb]a}\\
[\overline{Y}_{\beta}(x)]_{\alpha}&=[\overline{Y}_{\beta,-,\alpha}(x),\ \overline{Y}_{\beta,+,\alpha}(x)] \nonumber\\
&=\left[2e^{-x}+\frac{1}{2}(3\alpha-\alpha\beta+\beta+3)e^x,\ 2e^{-x}+\frac{1}{2}(-3\alpha+\alpha\beta-\beta+9)e^x\right]. \label{eq:P2[oYb]a}
\end{align}
Hence, the desired solution is composed of the $(\alpha,\beta)$-cut
\begin{align*}
[{\cal Y}(x)]_{\beta}^{\alpha}=\left\langle\left[\underline{Y}_{\beta,-,\alpha}(x),\underline{Y}_{\beta,+,\alpha}(x)\right],\ \left[\overline{Y}_{\beta,-,\alpha}(x),\overline{Y}_{\beta,+,\alpha}(x)\right]\right\rangle.
\end{align*}
with (\ref{eq:P2[uYb]a}) and (\ref{eq:P2[oYb]a}).

The same result is obtained by solving the given problem in the (2,2)-case.
({\tt A.C.})
\end{sol}

\begin{figure}[h]
\begin{center}
\includegraphics[width=4cm]{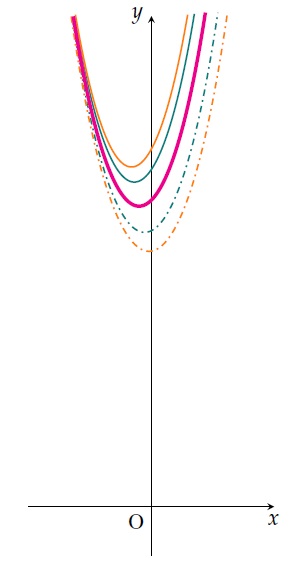}
\caption{The crisp and fuzzy solution of Problem \ref{prob:P2} if $(\alpha,\beta)=(1/3,1/2)$}
\label{fig:P2a1/3b1/2}
\end{center}
\end{figure}

{\small
\begin{rem}
Letting $\alpha=\beta=1$ in (\ref{eq:P2[uYb]a}) and (\ref{eq:P2[oYb]a}), we have the crisp solution 
\[
\textcolor{magenta}{y(x)=2e^{-x}+3e^x}
\]
to the crisp IVP:
\[
y''(x)-y'(x)=0,\quad y(0)=5,\quad y'(0)=1.
\]
We can thus find that type-2 (or type-1) fuzzy differential equation theory is an extension to crisp one.
Also, the solution if $(\alpha,\beta)=(1/3,1/2)$ with the parametric forms
\begin{align*}
\textcolor{teal}{[\underline{Y}_{1/2}(x)]_{1/3}}&\textcolor{teal}{=\left[2e^{-x}+\frac{5}{2}e^x,\ 2e^{-x}+\frac{7}{2}e^x\right]}, \\
\textcolor{orange}{[\overline{Y}_{1/2}(x)]_{1/3}}&\textcolor{orange}{=\left[2e^{-x}+\frac{13}{6}e^x,\ 2e^{-x}+\frac{23}{6}e^x\right]}
\end{align*}
is as shown in Figure \ref{fig:P2a1/3b1/2}.
\end{rem}
}

\subsection{Case of Negative Coefficients in the Usual Sense}

\begin{prob}
\label{prob:P3}
Let ${\cal Y}:[0,1]\to \mathscr{T}^2(\mathbb R)$ be a type-2 fuzzy function.
Then, solve T2FIVPs:
\begin{numcases}
{}
{\rm D}^2{\cal Y}(x)+(-1){\cal Y}(x)=0, \label{eq:d2B3}\\
{\cal Y}(0)={\it 5}\in \mathscr{QT}^2(\mathbb R), \\
{\rm D}{\cal Y}(0)={\it 1}\in \mathscr{QT}^2(\mathbb R).
\end{numcases}
\end{prob}

\begin{sol}
We consider the $(\alpha,\beta)$-cut set of (\ref{eq:d2B3}):
\begin{align*}
\left\langle[\underline{Y}^{\dag\dag}_{\beta}(x)]_{\alpha}+(-1)[\underline{Y}_{\beta}(x)]_{\alpha},\ [\overline{Y}^{\dag\dag}_{\beta}(x)]_{\alpha}+(-1)[\overline{Y}_{\beta}(x)]_{\alpha}\right\rangle=0,
\quad \alpha\in [0,1].
\end{align*}
This can be solved by (1,2) or (2,1)-T1-differentiation.
So, considering (1,2)-case, we have the following two T1FIVPs:
\begin{numcases}
{}
[\underline{Y}^{\dag\dag}_{\beta}(x)]_{\alpha}+(-1)[\underline{Y}_{\beta}(x)]_{\alpha}=[\underline{Y}''_{\beta,+,\alpha}(x)-\underline{Y}_{\beta,+,\alpha}(x),\ \underline{Y}''_{\beta,-,\alpha}(x)-\underline{Y}_{\beta,-,\alpha}(x)]=0, \label{eq:P3-1u}\\
\underline{Y}_{\beta}(0)=\underline{5}_{\beta}\in \mathscr{T}^1(\mathbb R), \\
\underline{Y}_{\beta}^{\dag}(0)=\underline{1}_{\beta}\in \mathscr{T}^1(\mathbb R), \label{eq:P3uYbd0}
\end{numcases}
and
\begin{numcases}
{}
[\overline{Y}^{\dag\dag}_{\beta}(x)]_{\alpha}+(-1)[\overline{Y}_{\beta}(x)]_{\alpha}=[\overline{Y}''_{\beta,+,\alpha}(x)-\overline{Y}_{\beta,+,\alpha}(x),\ \overline{Y}''_{\beta,-,\alpha}(x)-\overline{Y}_{\beta,-,\alpha}(x)]=0, \label{eq:dd2-1a}\\
\overline{Y}_{\beta}(0)=\overline{5}_{\beta}\in \mathscr{T}^1(\mathbb R), \\
\overline{Y}_{\beta}^{\dag}(0)=\overline{1}_{\beta}\in \mathscr{T}^1(\mathbb R). \label{eq:P3oYbd0}
\end{numcases}
We thus gain the same lower and upper equations (\ref{eq:P2first})-(\ref{eq:P2last}) as Problem \ref{prob:P2}.
By solving problems (\ref{eq:P3-1u})-(\ref{eq:P3uYbd0}) and (\ref{eq:dd2-1a})-(\ref{eq:P3oYbd0}), we can obtain $\underline{Y}_{\beta,\pm,\alpha}(x)$ and $\overline{Y}_{\beta,\pm,\alpha}(x)$ respectively.
Hence, the desired solution is the same as \ref{prob:P2} and is composed of the $(\alpha,\beta)$-cut
\begin{align*}
[{\cal Y}(x)]_{\beta}^{\alpha}=\left\langle\left[\underline{Y}_{\beta,-,\alpha}(x),\underline{Y}_{\beta,+,\alpha}(x)\right],\ \left[\overline{Y}_{\beta,-,\alpha}(x),\overline{Y}_{\beta,+,\alpha}(x)\right]\right\rangle.
\end{align*}
with (\ref{eq:P2[uYb]a}) and (\ref{eq:P2[oYb]a}).

The same result is obtained by solving the given problem in the (2,1)-case.
({\tt A.C.})
\end{sol}

{\small
\begin{rem}
Problem \ref{prob:P2} and \ref{prob:P3} imply that Hukuhara differentiation solves the problem of how the negative coefficients of T1/T2FDEs should be treated.
As a result, the negative coefficient may be unified into the Hukuhara difference.
\end{rem}
}

\section{Conclusion}
We have obtained some theorems on type-2 fuzzy calculus and concrete solution methods of T2FIVPs for second-order T2FDEs with constant coefficients in this paper.
T2FDEs with fuzzy coefficients can actually be assumed, but the solution method is exactly the same, although it becomes more complicated than T2FDEs with constant coefficients.

Type-2 fuzzy theory is almost equivalent to considering two (i.e. left and right) type-1 fuzzy sets in the end.
At first glance, it sounds like type-2-discussions are not necessary and type-1-discussions are sufficient.
For example, we can actually discuss the problem of measurement by a highly experienced expert and an inexperienced student, that is explained in Introduction, by comparing their type-1 membership functions.
However, it is possible to see and catch the difference between the two as integrated information by virtue of concepts of the foot-print and principle sets.
The same is true when the foot-print set (resp. principle set) is regarded as the old (resp. present) experiment in the case that parameters of second-order differential equations are determined by experimental rules.
We believe that here is the importance and usefulness of type-2 fuzzy theory or T2FDEs.

When we want to judge whether a certain value is close to $3$, for example, the type-2 fuzzy number is used if the grade of $3$ is `about $0.7$'.
In this paper, we set Definition \ref{df:T2FS} to describe that situation.
There are other definitions of a type-2 fuzzy set.
For example, we can also define a type-2 fuzzy set ${\cal A}$ by
\begin{align*}
&{\cal A}:=\{((x,\mu_0(x)),\ \mu_{{\cal A}}(\mu_0(x),u)):x\in X\}, \\
&\mu_0:X\to [0,1], \\
&\mu_{{\cal A}}:X\times R(\mu_0)\to [0,1]
\end{align*}
as the definition that differs our Definition \ref{df:T2FS}.
Here $R(\mu_0)$ denotes the range of $\mu_0$ called the primary membership function.
This definition considers the membership function $\mu_{{\cal A}}$ to be a two-variable function that implicitly contains the type-1 membership function $\mu_0$.
In particular, a `triangular' type-2 fuzzy number ${\cal U}$ can be also defined as 
\begin{align*}
\mu_{{\cal U}}(\mu_0(x),u):=
\begin{cases}
\displaystyle 1-\frac{|u-\mu_0(x)|}{\min\{\mu_0(x),1-\mu_0(x)\}}, & \mu_0(x)\neq \{0,1\}; \vspace{1mm}\\
\begin{cases}
1, & u=\mu_0(x); \\
0, & {\rm otherwise};
\end{cases} 
& \mu_0(x)=\{0,1\}.
\end{cases}
\end{align*} 
Then, primary $\mu_0$ should be decided subjectively by us.
For example, if we set $\mu_0(x)=\langle\!\langle 1,2,3\rangle\!\rangle$, the graph of ${\cal U}$ is as shown Figure \ref{fig:TTFN}.
\begin{figure}[h]
\begin{center}
\includegraphics[width=6cm]{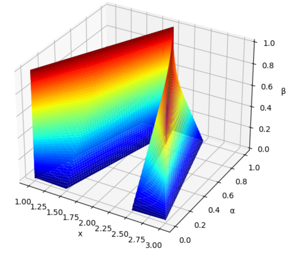}
\caption{`Triangular' type-2 fuzzy number with $\mu_0(x)=\langle\!\langle 1,2,3\rangle\!\rangle$}
\label{fig:TTFN}
\end{center}
\end{figure}
This definition fits our senses better, but the calculations will be complicated.
In fact, it can be seen that the sides are curved unlike perfect quasi-type-2 fuzzy numbers.
We would like to study the differential equations for type-2 fuzzy numbers in the case of this definition as a future task.

\appendix
\section{Appendix}
In this appendix, the notations, definitions and theorems on type-1 fuzzy numbers are written.

\subsection{Type-1 Fuzzy Numbers}

\begin{df}
A fuzzy set $u:\mathbb R\to [0,1]$ is the type-1 fuzzy number if and only if 
\begin{itemize}
\item[i)] $u$ is normal, that is, there exists an $x_0\in \mathbb R$ such that $u(x_0)=1$;
\item[ii)] $u$ is fuzzy convex, that is, $u(tx+(1-t)y)\ge \min\{u(x),u(y)\}$ for any $x,y\in \mathbb R$ and $t\in [0,1]$;
\item[iii)] $u$ is upper semi-continuous;
\item[iv)] the support ${\rm supp}(u):={\rm cl}(\{x\in \mathbb R:u(x)>0\})$ of $u$ is compact, where ${\rm cl}(S)$ stands for the closure of the crisp set $S$.
\end{itemize}
\end{df}

It is well known in the study of fuzzy theory so far that the argument of fuzzy sets can be reduced to that of the level cut sets.
The $\alpha$-cut set $[A]_{\alpha}$ of a type-1 fuzzy set $A$ on $X$ is defined by
\begin{align*}
[A]_{\alpha}&:=\{x\in X:A(x)\ge \alpha\},\quad \alpha\in (0,1]; \\
[A]_0&:={\rm supp}(u), \hspace{2.1cm} \alpha=0,
\end{align*}
and these level cut sets make up the original type-1 fuzzy set $A$:
\begin{align*}
A=\bigcup_{\alpha\in [0,1]}\alpha[A]_{\alpha}
\end{align*}
where $\alpha[A]_{\alpha}:X\to \{0,\alpha\}$ is a type-1 fuzzy set. 
Thus, it is sufficient to consider and argue $\alpha$-cut sets for most problems.
In particular, the type-1 fuzzy number is the fuzzy set whose arbitrary $\alpha$-cut set is a bounded and closed interval, so we put the following notation.

\begin{df}
We denote the $\alpha$-cut set of a type-1 fuzzy number $u:\mathbb R\to [0,1]$ by
\begin{align}
\label{eq:para}
[u]_{\alpha}=[u_{-,\alpha},u_{+,\alpha}]
\end{align}
for any $\alpha\in [0,1]$.
(\ref{eq:para}) is called the parametric form of $u$.
\end{df}

{\small
\begin{rem}
We write $0$ for crisp zero in this paper and its $\alpha$-cut set is represented  as the closed interval $[0,0]$.
In the same way, we regard a crisp real number $r$ as the closed interval $[r,r]$ and bring crisp numbers into the group of fuzzy numbers.
\end{rem}
}

\begin{rem}
Each end of $[u]_{\alpha}=[u_{-,\alpha},u_{+,\alpha}]$ should satisfy the followings:
\begin{itemize}
\item[1)] $u_{-,\alpha}$ is bounded, monotone increasing, left-continuous with respect to $\alpha\in (0,1]$ and right-continuous on $\alpha=0$;
\item[2)] $u_{+,\alpha}$ is bounded, monotone decreasing, left-continuous with respect to $\alpha\in (0,1]$ and right-continuous on $\alpha=0$;
\item[3)] $u_{-,\alpha}\le u_{+,\alpha}$ for any $\alpha\in [0,1]$.
\end{itemize}
In short, $[u]_{\alpha}$ that does not satisfy any of the above property cannot be called the fuzzy number.
\end{rem}

We hereafter omit the description `$\alpha\in [0,1]$' when we argue $\alpha$-cut sets or parametric forms.

\begin{df}
Let $u,v$ be type-1 fuzzy numbers on $\mathbb R$.
We denote the $\alpha$-cut sets of $u,v$ by 
\begin{align}
\label{eq:uvalpha}
[u]_{\alpha}=[u_{-,\alpha},u_{+,\alpha}],\quad 
[v]_{\alpha}=[v_{-,\alpha},v_{+,\alpha}]
\end{align}
respectively.
Then, $u=v$ if and only if $[u]_{\alpha}=[v]_{\alpha}$, i.e.
\begin{align*}
u_{-,\alpha}=v_{-,\alpha}\quad {\rm and}\quad 
u_{+,\alpha}=v_{+,\alpha}
\end{align*}
for any $\alpha\in [0,1]$.
\end{df}

\begin{df}
\label{df:+kop1}
Let $u,v$ be type-1 fuzzy numbers on $\mathbb R$ and $k\in \mathbb R$.
We denote the $\alpha$-cut sets of them by (\ref{eq:uvalpha}).
The sum $u+v$ of $u$ and $v$ is defined by
\begin{align*}
[u+v]_{\alpha}:=[u]_{\alpha}+[v]_{\alpha},
\end{align*}
i.e.
\begin{align*}
[u_{-,\alpha},u_{+,\alpha}]+[v_{-,\alpha},v_{+,\alpha}]
=[u_{-,\alpha}+v_{-,\alpha},\ u_{+,\alpha}+v_{+,\alpha}].
\end{align*}
Moreover, the scalar multiple $ku$ of $u$ is defined by
\begin{align*}
[ku]_{\alpha}:=k[u]_{\alpha}
=
\begin{cases}
[ku_{-,\alpha},ku_{+,\alpha}], & k\ge 0; \vspace{1mm}\\
[ku_{+,\alpha},ku_{-,\alpha}], & k<0.
\end{cases}
\end{align*}
\end{df}

\begin{df}
Let $u,v$ be type-1 fuzzy numbers on $\mathbb R$.
We denote the $\alpha$-cut sets of them by (\ref{eq:uvalpha}).
The product $uv$ of $u$ and $v$ is defined by
\begin{align*}
[uv]_{\alpha}=[u]_{\alpha}[v]_{\alpha}
=\left[\min_{i,j\in \{-,+\}}u_{i,\alpha}v_{j,\alpha},\ \max_{i,j\in \{-,+\}}u_{i,\alpha}v_{j,\alpha}\right]
\end{align*}
for any $\alpha\in [0,1]$.
\end{df}

\begin{df}
Let $u,v$ be type-1 fuzzy numbers on $\mathbb R$.
We denote the $\alpha$-cut sets of them by (\ref{eq:uvalpha}).
Then, the order relationship between them, $u\le v$, is defined as
\begin{align*}
[u]_{\alpha}\le [v]_{\alpha},
\quad {\rm i.e.}\quad
u_{-,\alpha}\le v_{-,\alpha}\ {\rm and}\ u_{+,\alpha}\le v_{+,\alpha}
\end{align*}
for any $\alpha \in [0,1]$.
In particular, the non-negativity (resp. positivity) of a type-1 fuzzy number $u$,  $u\ge 0$ (resp. $u>0$), is defined by
\begin{align*}
u_{-,\alpha}\ge 0\quad ({\rm resp.}\ u_{-,\alpha}>0)
\end{align*}
for any $\alpha \in [0,1]$.
\end{df}

\begin{df}
Let $u,v$ be type-1 fuzzy numbers on $\mathbb R$.
We denote the $\alpha$-cut sets of them by (\ref{eq:uvalpha}).
The fuzzy Hausdorff distance $d_{{\rm H}}$ of $u$ and $v$ is defined by
\begin{align*}
d_{{\rm H}}(u,v):=\sup_{\alpha\in [0,1]}\max\{|u_{-,\alpha}-v_{-,\alpha}|,|u_{+,\alpha}-v_{+,\alpha}|\}.
\end{align*}
We write $\mathscr{T}^1(\mathbb R)$ for the type-1 fuzzy number space equipped with the $d_{{\rm H}}$-topology.
\end{df}

\subsection{Type-1 Fuzzy Number-valued Functions \label{app:T1FNVF}}
Let $I$ be an interval which is a proper subset of $\mathbb R$ or let $I=\mathbb R$.
We can consider the type-1 fuzzy function $F:\mathscr{T}^1(I)\to \mathscr{T}^1(\mathbb R)$ by using Zadeh's extension principle for a crisp function $f:I\to \mathbb R$.
We set $\mathscr{T}^1(I)=I$ in this paper, that is, we consider crisp-variable type-1 fuzzy number-valued functions exclusively.
The $\alpha$-cut set of $F:I\to \mathscr{T}^1(\mathbb R)$ is represented by
\begin{align*}
[F(x)]_{\alpha}:=[F_{-,\alpha}(x),\ F_{+,\alpha}(x)]
\end{align*}
for all $x\in I$ and any $\alpha\in [0,1]$.

We define the difference of type-1 fuzzy numbers as the following sense, so as to consider the difference quotient of $F$.

\begin{df}
Let $u,v\in \mathscr{T}^1(\mathbb R)$.
If there exists some $w\in \mathscr{T}^1(\mathbb R)$ such that $u=v+w$, we write $w=u-v$ and call it the T1-Hukuhara difference of $u$ and $v$.
\end{df}

{\small
\begin{rem}
For any $u\in \mathscr{T}^1(\mathbb R)$, $-u$ stands for $0-u$, i.e.
\begin{align*}
[-u]_{\alpha}=[-u_{-,\alpha},-u_{+,\alpha}],
\end{align*}
whereas
\begin{align*}
[(-1)u]_{\alpha}=[-u_{+,\alpha},-u_{-,\alpha}].
\end{align*}
We should remark that, in general, 
\begin{align*}
u+(-1)v\neq u-v
\end{align*}
for any $u,v\in \mathscr{T}^1(\mathbb R)$.
\end{rem}
}

We adopt, in this paper, dagger ${\dag}$ and double dagger ${\ddag}$ to denote fuzzy derivatives in the sense of Hukuhara.
We use prime $'$ as the crisp derivative notation.

\begin{df}
Let $F:I\to \mathscr{T}^1(\mathbb R)$ and $h>0$ be a crisp number.
$F$ is T1-differentiable in the first form at some $x_0\in I$ if and only if there exist $F(x_0+h)-F(x_0)$ and $F(x_0)-F(x_0-h)$ satisfying that the fuzzy limit
\begin{align*}
F^\dag(x_0)
:=\lim_{h\downarrow 0}\frac{F(x_0+h)-F(x_0)}{h}
=\lim_{h\downarrow 0}\frac{F(x_0)-F(x_0-h)}{h}
\end{align*}
exists.
Moreover, $F$ is T1-differentiable in the second form at some $x_0\in I$ if and only if there exist $F(x_0)-F(x_0+h)$ and $F(x_0-h)-F(x_0)$ satisfying that the fuzzy limit
\begin{align*}
F^\ddag(x_0)
:=\lim_{h\downarrow 0}\frac{F(x_0)-F(x_0+h)}{-h}
=\lim_{h\downarrow 0}\frac{F(x_0-h)-F(x_0)}{-h}
\end{align*}
exists.
Here the above differences (resp. limits) are due to the meaning of T1-Hukuhara (resp. $d_{{\rm H}}$).
If $F$ is T1-differentiable in both senses at any $x\in I$, $F^\dag$ and $F^\ddag$ is called the (1)-T1-derivative and (2)-T1-derivative of $F$, respectively.
\end{df}

{\small
\begin{rem}
\label{rem:thirdfourth}
In addition to the above two forms, it is possible to consider the following two forms, that is,
\begin{itemize}
\item[i)] the third form:
\begin{align}
\label{eq:thirdf}
\lim_{h\downarrow 0}\frac{F(x_0+h)-F(x_0)}{h}
=\lim_{h\downarrow 0}\frac{F(x_0-h)-F(x_0)}{-h},
\end{align}
\item[ii)] the fourth form:
\begin{align}
\label{eq:fourthf}
\lim_{h\downarrow 0}\frac{F(x_0)-F(x_0+h)}{-h}
=\lim_{h\downarrow 0}\frac{F(x_0)-F(x_0-h)}{h}.
\end{align}
\end{itemize}
However, it is known (\cite{BG}, Theorem 7) that (\ref{eq:thirdf})  becomes the crisp number if there exist $F(x_0+h)-F(x_0)$ and $F(x_0-h)-F(x_0)$.
(\ref{eq:fourthf}) is also so if $F(x_0)-F(x_0+h)$ and $F(x_0)-F(x_0-h)$.
We shall thus ignore the third and fourth forms.
\end{rem}
}

{\small
\begin{rem}
We mention the validity of our dagger symbols $\dag,\ddag$ meaning as fuzzy derivatives.
First of all, we can avoid confusion between the crisp derivative and the fuzzy derivative by using $\dag$ and $\ddag$ (see e.g. Theorems \ref{thm:Ffda}, \ref{thm:Ff2da}, \ref{thm:cftimesff} and \ref{thm:compo} later).
Secondly, if we want to the number of the differential order to 2 or less, we can use $\dag,\ddag$ in the same sense as prime $'$ to clarify what kind of fuzzy derivative it is (in particular, see Theorems \ref{thm:MNpara}, \ref{thm:FT2sdba}, \ref{thm:F+GsHdd} and \ref{thm:F-GT2d} later).
Moreover, the acronym for a letter is often used to represent a mathematical concept or quantity, such as $\Delta$ ({\bf D}elta) for differences or $D$ for derivatives.
From this point of view, `dagger' has an appropriate acronym {\bf D} to represent derivatives.
\end{rem}
}

By using this definition repeatedly, we find that the $n$th-order T1-derivative of $F$ has $2^n$ forms.
For example, when applied to fuzzy differential equations, we need to choose the most appropriate solution from these $2^n$ solutions.

\begin{thm}
\label{thm:Ffda}
Let $F:I\to \mathscr{T}^1(\mathbb R)$ be T1-differentiable on $I$.
Then, the parametric forms of its T1-derivatives are given by
\begin{itemize}
\item[1)] the first parametric form:
\begin{align*}
[F^\dag(x)]_{\alpha}=[F_{-,\alpha}'(x),F_{+,\alpha}'(x)],
\end{align*}
\item[2)] the second parametric form:
\begin{align*}
[F^\ddag(x)]_{\alpha}=[F_{+,\alpha}'(x),F_{-,\alpha}'(x)].
\end{align*}
\end{itemize}
\end{thm}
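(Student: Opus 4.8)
The plan is to push everything down to the level of $\alpha$-cuts and reduce the fuzzy assertions to statements about the crisp endpoint functions $F_{-,\alpha}$ and $F_{+,\alpha}$. Two ingredients do all the work: the definition of the T1-Hukuhara difference together with the addition formula $[u+v]_{\alpha}=[u_{-,\alpha}+v_{-,\alpha},\,u_{+,\alpha}+v_{+,\alpha}]$, and the scalar-multiplication rule, which is sign-sensitive. First I would fix $x_0\in I$ and a crisp $h>0$ and compute the $\alpha$-cut of the Hukuhara difference $F(x_0+h)-F(x_0)$. Writing $w:=F(x_0+h)-F(x_0)$, the defining relation $F(x_0+h)=F(x_0)+w$ and the addition formula force
\[
[F(x_0+h) - F(x_0)]_{\alpha} = [\,F_{-,\alpha}(x_0+h) - F_{-,\alpha}(x_0),\ F_{+,\alpha}(x_0+h) - F_{+,\alpha}(x_0)\,].
\]

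For the first form, dividing by $h>0$ is scalar multiplication by a positive number, so the endpoints are preserved and
\[
\left[\frac{F(x_0+h) - F(x_0)}{h}\right]_{\alpha} = \left[\frac{F_{-,\alpha}(x_0+h) - F_{-,\alpha}(x_0)}{h},\ \frac{F_{+,\alpha}(x_0+h) - F_{+,\alpha}(x_0)}{h}\right].
\]
Letting $h\downarrow 0$ and recognizing each endpoint as a one-sided difference quotient of a crisp function yields $[F^\dag(x_0)]_{\alpha}=[F_{-,\alpha}'(x_0),F_{+,\alpha}'(x_0)]$. The second representation in the definition of $F^\dag$, built from $F(x_0)-F(x_0-h)$, runs identically (it produces the backward difference quotients of the endpoints) and gives the same limit, which is exactly the consistency demanded by the two-sided definition.

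For the second form the only change is that the difference quotient is divided by $-h<0$. By the scalar-multiplication rule for negative $k$ the two endpoints swap, so after simplifying the signs the lower endpoint of $[F^\ddag(x_0)]_{\alpha}$ becomes $F_{+,\alpha}'(x_0)$ and the upper endpoint becomes $F_{-,\alpha}'(x_0)$. This negative-scalar swap is the entire source of the order flip between the two parametric forms, and again the alternative representation $\lim_{h\downarrow 0}\bigl(F(x_0-h)-F(x_0)\bigr)/(-h)$ gives the same answer.

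The main obstacle I expect is justifying the passage to the limit, since the limit is taken in the $d_{\mathrm H}$-topology rather than pointwise. Here I would invoke the definition of $d_{\mathrm H}$ as a supremum over $\alpha$ of the larger endpoint distance: convergence of the fuzzy difference quotient to $F^\dag(x_0)$ is then equivalent to convergence, uniform in $\alpha$, of both endpoint difference quotients, and this is precisely what lets me read off the crisp derivatives $F_{\pm,\alpha}'(x_0)$. The T1-differentiability hypothesis is used twice in this step: it guarantees that the Hukuhara differences exist for all small $h$, so the $\alpha$-cut computation above is legitimate, and that the fuzzy limit exists, so the endpoint derivatives exist. A final routine check is that the resulting interval-valued maps obey the monotonicity and ordering constraints required of the endpoints of a genuine fuzzy number; this is automatic because the differentiability hypothesis already places $F^\dag(x_0),F^\ddag(x_0)$ in $\mathscr{T}^1(\mathbb R)$.
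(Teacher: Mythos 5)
Your argument is correct. Note, however, that the paper itself gives no proof of Theorem \ref{thm:Ffda}: it is stated as a known fact from the type-1 literature (the standard parametric characterization of the two generalized Hukuhara derivatives; compare the citations to Kaleva and Chalco-Cano--Rom\'an-Flores attached to the second-order analogue, Theorem \ref{thm:Ff2da}). Your proposal supplies exactly the standard argument that the paper omits: the $\alpha$-cut of a Hukuhara difference is the interval of endpoint differences, division by $h>0$ preserves the endpoints while division by $-h<0$ swaps them (which is the sole source of the order reversal in the second form), and $d_{\mathrm H}$-convergence forces convergence of both endpoint difference quotients for each fixed $\alpha$, so the endpoints of the limit are the crisp derivatives $F_{\pm,\alpha}'(x_0)$. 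You also correctly flag the two points where the hypothesis is actually used --- existence of the Hukuhara differences for small $h$ and existence of the fuzzy limit --- so there is no gap to report.
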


\begin{thm}[\cite{K}, Theorem 5.2; \cite{CCRF}, Theorem 5]
\label{thm:Ff2da}
Let $F:I\to \mathscr{T}^1(\mathbb R)$ be second-order T1-differentiable on $I$.
Then, the parametric forms of the second-order T1-derivatives are given by
\begin{itemize}
\item[1)] the first and first parametric form:
\begin{align*}
[F^{\dag\dag}(x)]_{\alpha}=[F_{-,\alpha}''(x),F_{+,\alpha}''(x)],
\end{align*}
\item[2)] the first and second parametric form:
\begin{align*}
[F^{\dag\ddag}(x)]_{\alpha}=[F_{+,\alpha}''(x),F_{-,\alpha}''(x)],
\end{align*}
\item[3)] the second and first parametric form:
\begin{align*}
[F^{\ddag\dag}(x)]_{\alpha}=[F_{+,\alpha}''(x),F_{-,\alpha}''(x)],
\end{align*}
\item[4)] the second and second parametric form:
\begin{align*}
[F^{\ddag\ddag}(x)]_{\alpha}=[F_{-,\alpha}''(x),F_{+,\alpha}''(x)].
\end{align*}
\end{itemize}
\end{thm}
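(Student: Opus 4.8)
The plan is to iterate Theorem~\ref{thm:Ffda}, applying it once for each of the two differentiations and reading the composite daggers from left to right, so that, for instance, $F^{\ddag\dag}=(F^{\ddag})^{\dag}$. Theorem~\ref{thm:Ffda} records exactly how the two ends of the $\alpha$-cut behave under a single step: the first form differentiates the endpoints in place, $[F^{\dag}(x)]_{\alpha}=[F_{-,\alpha}'(x),F_{+,\alpha}'(x)]$, whereas the second form differentiates them and then interchanges them, $[F^{\ddag}(x)]_{\alpha}=[F_{+,\alpha}'(x),F_{-,\alpha}'(x)]$. The whole proof then amounts to tracking this interchange across the two steps.

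First I would treat the intermediate first-order derivative as a fresh fuzzy function. For the two cases beginning with $\dag$, put $G:=F^{\dag}$, so that $G_{-,\alpha}=F_{-,\alpha}'$ and $G_{+,\alpha}=F_{+,\alpha}'$. Applying Theorem~\ref{thm:Ffda} to $G$ in the first form yields $[G^{\dag}(x)]_{\alpha}=[G_{-,\alpha}'(x),G_{+,\alpha}'(x)]=[F_{-,\alpha}''(x),F_{+,\alpha}''(x)]$, which is claim~1); applying it to $G$ in the second form yields $[G^{\ddag}(x)]_{\alpha}=[G_{+,\alpha}'(x),G_{-,\alpha}'(x)]=[F_{+,\alpha}''(x),F_{-,\alpha}''(x)]$, which is claim~2).

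For the two cases beginning with $\ddag$, the relabelling already absorbs one interchange: with $H:=F^{\ddag}$ one has $H_{-,\alpha}=F_{+,\alpha}'$ and $H_{+,\alpha}=F_{-,\alpha}'$. A second application of Theorem~\ref{thm:Ffda} then gives $[H^{\dag}(x)]_{\alpha}=[H_{-,\alpha}'(x),H_{+,\alpha}'(x)]=[F_{+,\alpha}''(x),F_{-,\alpha}''(x)]$ for claim~3), and $[H^{\ddag}(x)]_{\alpha}=[H_{+,\alpha}'(x),H_{-,\alpha}'(x)]=[F_{-,\alpha}''(x),F_{+,\alpha}''(x)]$ for claim~4). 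The upshot is a parity rule: an even number of second-form steps leaves the ends in the natural order $[F_{-,\alpha}'',F_{+,\alpha}'']$, while an odd number swaps them to $[F_{+,\alpha}'',F_{-,\alpha}'']$.

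The step I expect to be the main obstacle is not this bookkeeping but the well-definedness of the intermediate function fed into Theorem~\ref{thm:Ffda} at the second stage. After a $\ddag$ step the parametric form reads $[F_{+,\alpha}',F_{-,\alpha}']$, so before differentiating again one must know that $H=F^{\ddag}$ is a genuine element of $\mathscr{T}^1(\mathbb R)$ for each $x$: its lower end $F_{+,\alpha}'$ must be monotone increasing in $\alpha$, its upper end $F_{-,\alpha}'$ monotone decreasing, and $F_{+,\alpha}'\le F_{-,\alpha}'$, as the endpoint conditions for a fuzzy number demand. Likewise the T1-Hukuhara differences needed to form the second derivative must exist. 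Both are exactly what the hypothesis ``second-order T1-differentiable on $I$'' is meant to supply, and I would make this the standing assumption that legitimises the second invocation of Theorem~\ref{thm:Ffda}.
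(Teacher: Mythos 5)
Your proof is correct and follows essentially the same route the paper takes: the paper cites this result from Kaleva and Chalco-Cano--Rom\'an-Flores without proof, but its proof of the directly analogous type-2 statement (Theorem \ref{thm:FT2sdba}) is exactly your argument, namely substituting $F^{\dag}$ or $F^{\ddag}$ for $F$ in the first-order parametric-form theorem and tracking the endpoint swap. Your additional remark that the second-order differentiability hypothesis is what legitimises feeding the intermediate derivative back into Theorem \ref{thm:Ffda} is a sensible point the paper leaves implicit.
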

\vspace{2mm}

{\small
\begin{center}
{\bf Acknowledgement}
\end{center}

The authors are deeply grateful to Dr. Jiro Inaida for his valuable and heartfelt advice on fuzzy number theory.
}

\vspace{5mm}

{\small
\noindent
{\bf The Authors}:
\vspace{2mm}

\noindent
{\sc Norihiro Someyama};

\quad Completed Ph.D. program without a Ph.D. degree, Major in Mathematics and Mathematical Physics
\vspace{1mm}

\noindent
{\sc Hiroaki Uesu};\quad Ph.D., Major in Mathematics and Information Theory
\vspace{1mm}

\noindent
{\sc Kimiaki Shinkai};\quad Ph.D., Major in Mathematics and Mathematical Education
\vspace{1mm}

\noindent
{\sc Shuya Kanagawa};\quad Ph.D., Major in Mathematics and Statistics
}
\end{document}